\documentclass[10]{amsart}
\usepackage{amsmath}
\usepackage{amssymb}

\usepackage{mathtools}
\setlength{\parindent}{0pt}
\usepackage{relsize}    
\usepackage{graphicx} 

\newtheorem{theorem}{Theorem}[section]
\newtheorem{corollary}{Corollary}[theorem]
\newtheorem{lemma}[theorem]{Lemma}
\newtheorem{Example}{Example}[theorem]
\theoremstyle{definition}

\theoremstyle{remark}

\usepackage[colorlinks]{hyperref}
\usepackage[margin=1.5cm]{geometry}

\begin{document}

\title{Solutions to the matrix Yang-Baxter equation}

\author{Mukherjee, Himadri}
\address{BITS Pilani K. K. Birla Goa Campus, Goa, India}
\email{himadrim@goa.bits-pilani.ac.in}

\author{Ali M, Askar}
\address{BITS Pilani K. K. Birla Goa Campus, Goa, India}
\email{p20190037@goa.bits-pilani.ac.in}

\thanks{AMS Classification 2020. Primary: 15A24, 16T25}
\keywords{Yang-Baxter equation, Matrix equation}

\maketitle

\noindent
\begin{abstract}

In this article, we give a few classes of solutions for the Yang-Baxter type matrix equation, $AXA=XAX$. We provide all solutions for the cases when $A$ is equivalent to a Jordan block or has precisely two Jordan blocks. We also have given a few general properties of the solutions of the YB-equation.
\end{abstract}

\section{Introduction}
The Yang-Baxter equation occurs naturally in the context of statistical mechanics. It is also a natural equation in the context of braid groups, where it is a simple identity of two different compositions of braid patterns. The equation in the matrix form has been studied by many authors recently. We will call the following equation the matrix YB-equation, where $A \in M_n(K)$. 

\begin{equation}\label{main_equation}
AXA=XAX
\end{equation}

The Yang-Baxter equations have been studied across the fields of mathematics with numerous fascinating interconnections. From topology to physics to linear algebra and the theory of quantum groups, this equation has generated a feverish rush of quality research among mathematicians and physicists alike. Solving the equations in the setting of linear algebra poses an interesting problem that has a number of applications in other fields. An understanding of the solution in the linear algebra setting will definitely give important insight into the solution of YBE in general.

A number of interesting works have come up since the question posed by Drinfeld in \cite{Drinfeld}, namely the works in the YBE in set-theoretic set up \cite{set_theoretic} exploring the equation and solutions in the category of $Sets$.

The complexity of having $n^2$ non-linear equation in $n^2$ variable to solve, makes it challenging to find complete solutions for arbitrary coefficient matrix. This compelled many researchers to find solutions for particular A, especially having algebraic properties like diagonalizability and commutativity with the solution matrix. Moreover, the non-linearity of the questions paves ways for geometric (or commutative algebraic) techniques such as manifold theory ( Grobner basis) to be applied to get further insight into the space of solutions. We will summarise a few results that deal with YBE.
In \cite{idempotent2},the  authors discuss the solution to YBE when A is an idempotent matrix, by means of diagonalizing A for two distinct eigenvalues. Also, the authors explain the technique to find solutions when $-A = A^2$. The article \cite{Diag2} contains solutions when A is a diagonalizable matrix with spectrum contained in $\{1, \alpha,0\}$, and extended these results for the case of idempotent and rank-one matrices. \\
The authors in \cite{commuting}, found all commuting solutions for A in Jordan canonical form, and corresponding to each Jordan block, authors described solutions in Toeplitz forms.  In \cite{elementary}, authors found complete solutions when A is an elementary matrix of the form $A = I- uv^T$, where $v^Tu \neq 0$, which makes A diagonalizable. In a similar passion, authors in \cite{I-rank2} found solutions when $A = I – PQ^T$, where P and Q are two $n \times 2$ complex matrices of full column rank such that $det(Q^T P) = 0$. In \cite{3eigenvalue}, authors have found solutions for A having eigenvalues $0, \lambda, \mu$ and A diagonalizable by  Jordan decomposition of A. \\
Irrespective of its simple appearance, when the coefficient matrix is nilpotent poses additional challenges to solving it. In \cite{nilpotent}, authors have found all commuting solutions for A being a nilpotent matrix. Later, one of the same authors, along with others, gave an equivalent system for finding commuting solutions to the YBE, when A is nilpotent and has an index 3\cite{nilpotent2}. \\
In \cite{rank1} author finds complete solutions for YBE when A is rank 1 matrix using the spectral decomposition $A=pq^T$, where p and q are non-zero complex vectors. In \cite{rank2.1} the authors have studied the solution when $A=PQ^T$, where P and Q are $n\times 2$ vectors, with an assumption $QP^T$ non-singular. In an extension study \cite{rank2.2} the same authors have studied the solutions for $QP^T$ being singular with the algebraic multiplicity of eigenvalue 0 being more than $n-2$. In \cite{A4=A} authors have studied non-commuting solutions when A satisfies $A^4 = A$, by exploring various possibilities for its minimal polynomials. \\
 In \cite{YBE2}, authors have given a new horizon for the discussion of the solution to the YBE, by tracing the path-connected subsets of the solution.  Also, the authors explain techniques to generate infinitely many solutions  from existing solutions using generalized inverse. 
 
 \textbf{In \cite{manifold}, author has given manifold structure for the solution set of matrix YBE when the coefficient matrix is  having rank 1.
  In \cite{commuting-nonsingular}, authors have found all commuting solutions of YBE when the coefficient matrix is non-singular.}

In our paper, the results are organized in the manner that there are global results, i.e the results without any assumption on the coefficient matrix or the type of the solutions and special results which come by with assumptions on the coefficient matrix and/or the type of solutions. Before every result, we have mentioned whether it is a global or a local result to facilitate the reader. There are a few minor overlaps in the results, especially with the newest articles, but each time we have presented a new way of looking at it or have extended the result. Which is why we have not removed those results from our paper but have mentioned whenever such overlaps occurred. 

\section{Main Results}
For the coefficient matrix $A$, having a single Jordan block with non-zero eigenvalue, we have the following theorem completely characterizing the solution of the Yang-Baxter equation, $AXA = XAX$.
\begin{theorem}
If $A=\lambda I+B$ is a Jordan block, then for the YBE, $AXA = XAX$, the following are true.
\begin{enumerate}
    \item If $\lambda \neq 0$ and $det(X) \neq 0$ then $X\simeq A$.
    \item If $\lambda \neq 0$ and $det(X)=0$ then $X=0$.
    \item If $\lambda =0$, then the YBE do not have any invertible solution X.  
\end{enumerate}
\end{theorem}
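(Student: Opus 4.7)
The plan is to exploit the fact that $AXA=XAX$ is equivalent to two intertwining relations: setting $T:=XA$ and $S:=AX$, the equation becomes
\[
AT\;=\;TX \qquad\text{and}\qquad SA\;=\;XS ,
\]
and induction immediately upgrades these to $p(A)\,T=T\,p(X)$ and $S\,p(A)=p(X)\,S$ for every $p\in K[t]$. Part (1) is then immediate: when $\lambda\neq 0$ and $X$ is invertible, so is $T=XA$, and $AT=TX$ gives $X=T^{-1}AT$, hence $X\simeq A$.

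For part (2), let $r:=\mathrm{rank}(X)$ with $1\le r\le n-1$, and set $V:=\mathrm{Im}(X)$, $W:=\ker(X)$. The intertwinings show $V$ and $W$ are $A$-invariant (using $\mathrm{Im}(T)=\mathrm{Im}(X)$ and $\ker(S)=\ker(X)$, both valid because $A$ is invertible); since $A$ has a single Jordan block, its invariant subspaces are exactly $\ker(B^k)=\mathrm{span}(e_1,\dots,e_k)$, forcing $V=\ker(B^r)$ and $W=\ker(B^{n-r})$. Now apply $S\,p(A)=p(X)\,S$ with $p(t)=(t-\lambda)^n$, for which $p(A)=B^n=0$. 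This yields $(X-\lambda I)^n\,AX=0$, so $(X-\lambda I)^n$ vanishes on $A\cdot V=V$ (where $A|_V$ is invertible since $\lambda\neq 0$). As $V$ is $X$-invariant, every eigenvalue of $X|_V$ must equal $\lambda$; but $\ker(X|_V)=V\cap W=\mathrm{span}(e_1,\dots,e_{\min(r,n-r)})$ is nonzero whenever $1\le r\le n-1$, producing the eigenvalue $0$. This contradiction forces $r=0$, hence $X=0$.

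For part (3), suppose $\lambda=0$ and, for contradiction, that $X$ is invertible. Then $T=XB$ has $\ker T=\ker B=\mathrm{span}(e_1)$, which is $X$-invariant by $AT=TX$; moreover $\mathrm{Im}(T)$ has dimension $n-1$ and is $A$-invariant, so $\mathrm{Im}(T)=\mathrm{Im}(B)$. The induced map $\bar X$ on $K^n/\ker T$ is similar (via the induced isomorphism $\bar T$) to $B|_{\mathrm{Im}(B)}$, a single nilpotent Jordan block of size $n-1$; hence the characteristic polynomial of $X$ factors as $(t-c)\,t^{n-1}$ for some $c\in K$, giving $X$ the eigenvalue $0$ and contradicting invertibility.

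The hard part will be (2): ruling out every nonzero singular solution simultaneously. The decisive step is to combine Cayley--Hamilton for $A$ (so that $p(A)=B^n=0$) with the polynomial intertwining $S\,p(A)=p(X)\,S$, which transports the nilpotency of $A-\lambda I$ into the identity $(X-\lambda I)^n|_V=0$ and then collides with the fact that $X|_V$ genuinely annihilates $V\cap W$.
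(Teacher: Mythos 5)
Your proposal is correct, and apart from part (1) — which is the same one-line conjugation trick as the paper's Lemma \ref{both_invertible} (you write $X=T^{-1}AT$ with $T=XA$, the paper writes $X^{-1}AX=AXA^{-1}$) — it follows a genuinely different route. Your intertwining relations $p(A)\,XA=XA\,p(X)$ and $p(X)\,AX=AX\,p(A)$ are exactly the paper's Lemma \ref{powerlemma}, and specializing $p$ to the characteristic polynomial recovers its identity $\phi_A(X)AX=0$; but the use you make of them differs. For part (2) the paper goes through spectral lemmas (the inclusion $\sigma(X)\subseteq\sigma(A)\cup\{0\}$, $A$-invariance of $\ker X$, and the lemma that $\ker X=E_\lambda$ forces $\lambda\notin\sigma(X)$), whereas you use that the invariant subspaces of a single Jordan block form the chain $\ker B^{k}$, so that $\operatorname{Im}X$ and $\ker X$ are coordinate subspaces both containing $e_1$, and then the Cayley--Hamilton intertwining $(X-\lambda I)^{n}AX=0$ forces $X|_{\operatorname{Im}X}=\lambda I+\mathrm{nilpotent}$, which is invertible for $\lambda\neq0$ and collides with $e_1\in\operatorname{Im}X\cap\ker X$. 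This is arguably tighter than the paper's Lemma \ref{second}, which lists only $E_\lambda$ and $K^{n}$ as possible kernels and silently skips the intermediate invariant subspaces $\ker B^{k}$, $1<k<n$; your chain argument covers all of them and needs no eigenvalue of $X$ beyond the genuine kernel vector. For part (3) the paper argues through Theorem \ref{zero-solution} (disjoint spectra make $\phi_A(X)$ invertible, hence $XA=0$), while you pass to the quotient $K^{n}/\ker(XB)$ and conjugate the induced map to $B|_{\operatorname{Im}B}$, getting $\chi_X(t)=(t-c)\,t^{\,n-1}$ and hence singularity of $X$; both arguments implicitly require the block size to be at least $2$ (for $n=1$ one has $A=0$ and every invertible $X$ solves the equation), a caveat your proof shares with the paper's.
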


For the case where $A$ has two Jordan blocks of the same size, and the determinant of $A$ is not zero, we have the following theorem completely characterizing the solutions of the YBE, $AXA = XAX$. Let $Sol_A$ denote the set of all solutions of the YBE.
\begin{theorem}
Let $\mathcal{A}$ be a matrix having only two eigenvalues $\lambda_1, \lambda_2 \neq 0$, with the same algebraic multiplicity, and geometric multiplicity 1. Further, let us assume 0 is an eigenvalue of $X$. Let $U \in GL_n(K)$ be defined as, the columns of $U$ are generalized eigenvectors of $\mathcal{A}$,  such that $ U\mathcal{A}U^{-1}$ is Jordan-canonical form  of $\mathcal{A}$. Then one of the following is true. 
\begin{enumerate}
    \item $X=0$
    \item $X = U^{-1} \begin{pmatrix}
    0 & ZS^{-1}A \\
    0 & Y_2
    \end{pmatrix}U$  or $U^{-1} \begin{pmatrix}
    0 & 0 \\
    0 & Y_2
    \end{pmatrix} U$, where $Y_2 \in Sol_{A}$, $Z \in K[A]$, $A$ is the Jordan block corresponding to eigenvalue $\lambda= \lambda_1 = \lambda_2$, and for some $S \in GL_n(K)$
    \item $X = U^{-1} \begin{pmatrix}
    Y_1 & 0 \\
    ZS^{-1}A & 0
    \end{pmatrix} U$ or $U^{-1} \begin{pmatrix}
    Y_1 & 0 \\
   0 & 0
    \end{pmatrix} U$, where $Y_1 \in Sol_{A}$, $Z \in K[A]$, $A$ is the Jordan block corresponding to eigenvalue $\lambda= \lambda_1 = \lambda_2$, and for some $S \in GL_n(K)$
\end{enumerate}
\end{theorem}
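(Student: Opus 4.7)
The approach is to reduce the problem to the block-diagonal Jordan form of $\mathcal{A}$ and analyze the block-partitioned YBE. Because $X \mapsto UXU^{-1}$ carries $Sol_{\mathcal{A}}$ bijectively onto $Sol_{U\mathcal{A}U^{-1}}$, I may assume from the start that $\mathcal{A}$ has been replaced by its Jordan form $\begin{pmatrix} A & 0 \\ 0 & A \end{pmatrix}$, where $A = \lambda I + B$ is a single Jordan block with $\lambda \neq 0$; the outer conjugation by $U$ appearing in the statement then just pulls the answer back to the original basis.

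Next, I would partition $X = \begin{pmatrix} X_{11} & X_{12} \\ X_{21} & X_{22} \end{pmatrix}$ conformably with $\mathcal{A}$ and expand $\mathcal{A}X\mathcal{A} = X\mathcal{A}X$ block-by-block, obtaining the coupled system
\begin{align*}
AX_{11}A &= X_{11}AX_{11} + X_{12}AX_{21},\\
AX_{12}A &= X_{11}AX_{12} + X_{12}AX_{22},\\
AX_{21}A &= X_{21}AX_{11} + X_{22}AX_{21},\\
AX_{22}A &= X_{21}AX_{12} + X_{22}AX_{22}.
\end{align*}

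The crux of the argument is to use the hypothesis that $0$ is an eigenvalue of $X$ to eliminate the possibility that both off-diagonal blocks $X_{12}$ and $X_{21}$ are nonzero, thereby forcing $X$ into block-triangular form. This is the main obstacle: the diagonal equations are entangled with the off-diagonal blocks through the cross terms $X_{12}AX_{21}$ and $X_{21}AX_{12}$, so a purely local analysis of a single block does not suffice. I plan to combine a rank/determinant count coming from $\det X = 0$ with the invertibility of $A$ (guaranteed by $\lambda \neq 0$) to derive a contradiction whenever both $X_{12}$ and $X_{21}$ are nonzero, reducing the problem to the two block-triangular regimes described in cases (2) and (3).

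Once $X$ is in block-triangular form, say $X_{21} = 0$, the diagonal equations decouple to $AX_{ii}A = X_{ii}AX_{ii}$, so each of $X_{11}, X_{22}$ lies in $Sol_{A}$. The preceding theorem (the single Jordan block case with $\lambda \neq 0$) then forces each diagonal block to be either $0$ or similar to $A$, hence invertible; since $\det X = \det X_{11}\,\det X_{22} = 0$, at least one of the diagonal blocks must vanish. In the subcase $X_{11} = 0$ and $X_{22} = Y_2$ invertible, the remaining off-diagonal equation collapses to $AX_{12}A = X_{12}AY_2$. Writing $Y_2 = S^{-1}AS$ from the previous theorem and multiplying on the right by $S^{-1}$, one checks that $X_{12}AS^{-1}$ commutes with $A$; since the centralizer of a single Jordan block is the polynomial algebra $K[A]$, this element is precisely the $Z \in K[A]$ of the statement, and solving gives the announced form of $X_{12}$. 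The subcase $Y_2 = 0$ forces $X_{12} = 0$ by the invertibility of $A$, yielding the second alternative in (2); the symmetric analysis with $X_{12} = 0$ in place of $X_{21} = 0$ produces case (3), and the completely trivial solution is $X = 0$.
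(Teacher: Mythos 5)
Your endgame coincides with the paper's: the diagonal blocks are disposed of by the single-Jordan-block theorem, and the off-diagonal block is treated by writing the invertible diagonal block as a conjugate of $A$ and using that whatever commutes with a nonderogatory Jordan block lies in $K[A]$ --- this is exactly the paper's Lemma \ref{lemma_twoblock} (Sylvester plus centralizer). What is missing is the step that actually produces the block structure, and that step is the heart of the theorem. You only announce that you ``plan to combine a rank/determinant count'' with the invertibility of $A$ to rule out both off-diagonal blocks being nonzero; no argument is given, and no routine determinant count does this. The paper does not argue this way at all: it uses that $A$ preserves $Ker(X)$ (Lemma \ref{preserver}), the hypothesis $0\in\sigma(X)$ so that $Ker(X)\neq 0$, and its earlier lemma forcing $Ker(X)$ to be $P_1$, $P_2$ or $P_1\oplus P_2$; choosing the basis $U$ adapted to this kernel then kills an entire block \emph{column} of $X$ (both $X_{11}=X_{21}=0$, or both $X_{12}=X_{22}=0$), which is strictly stronger than the block-triangularity you aim for and is what makes the remaining two block equations tractable.

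Even granting block-triangularity, your case analysis inside that regime fails. With $X_{21}=0$, $X_{11}$ invertible and $X_{22}=0$, the $(1,2)$ equation reads $AX_{12}A=X_{11}AX_{12}$; since $\sigma(X_{11})=\{\lambda\}=\sigma(A)$, Sylvester's theorem gives no rigidity and nonzero $X_{12}$ genuinely occur: for $\mathcal{A}=\begin{pmatrix} A&0\\ 0&A\end{pmatrix}$ the singular matrix $X=\begin{pmatrix} A & I\\ 0&0\end{pmatrix}$ satisfies $\mathcal{A}X\mathcal{A}=X\mathcal{A}X$, yet has no vanishing block column in this basis. So your claim that the subcase $Y_2=0$ ``forces $X_{12}=0$ by the invertibility of $A$'' is false unless $X_{11}=0$ as well, and solutions like the one above are only brought to the stated forms by re-choosing the basis (the freedom in $U$), i.e.\ precisely by the kernel analysis you skipped. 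A smaller point: you silently set $\lambda_1=\lambda_2$ at the outset; the case $\lambda_1\neq\lambda_2$ is handled in the paper by the spectral-disjointness half of the same Sylvester lemma (it forces the off-diagonal block to vanish) and should not be dropped.
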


\section{preliminaries}
Let us recall the following notations, which will be used throughout the discussion.
\begin{itemize}
    \item Let $K$ be a field. For a coefficient matrix $A \in M_n(K)$, the set of all solutions to the Yang-Baxter equation $AXA = XAX$ is denoted by $Sol_A := \{X \in M_n(K)| AXA = XAX\}$.
    \item For a matrix $M \in M_n(K)$, $\sigma(M)$ denotes the spectrum of $M$.
    \item  Let $V$ be a vector space, then we say $A \in M_n(K)$ preserves $V$, if $A(V) \subseteq V$.
    \item For a $M \in M_n(K)$, $min_M$ represent the minimal polynomial of $M$.
    \item  $K[A]$ represent the polynomial ring on $A \in M_n(K)$, over $K$.
    \item Throughout the discussion, $B$ represent the nilpotent block $\begin{pmatrix}
        0&1&0&\dots&0\\
        0&0&1&\dots&0\\
        .\\
        .\\
        .\\
        0&0&0&\dots&1\\
        0&0&0&\dots&0
    \end{pmatrix},$ with appropriate dimension. 
\end{itemize}    
\begin{theorem}[\cite{sylvester-original}]\label{sylvester}
     Given matrices $ A\in \mathbb {C} ^{n\times n}$ and $ B\in \mathbb {C} ^{m\times m}$, the Sylvester equation $AX+XB=C$ has a unique solution $ X\in \mathbb {C} ^{n\times m}$ for any $ C\in \mathbb {C} ^{n\times m}$, if and only if A and -B do not share any eigenvalue.
\end{theorem}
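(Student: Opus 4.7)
The plan is to view $T:M_{n\times m}(\mathbb{C})\to M_{n\times m}(\mathbb{C})$ defined by $T(X)=AX+XB$ as a linear operator on a finite-dimensional vector space. Then $T$ is bijective iff it is injective iff $\ker T=\{0\}$, so the existence-and-uniqueness statement for all right-hand sides $C$ is equivalent to showing that the homogeneous equation $AX+XB=0$ forces $X=0$ exactly when $\sigma(A)\cap \sigma(-B)=\emptyset$. The proof then splits naturally into two implications.

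For the sufficiency direction, assume $\sigma(A)\cap \sigma(-B)=\emptyset$ and suppose $AX+XB=0$, i.e.\ $AX=X(-B)$. An easy induction gives $A^kX=X(-B)^k$ for every $k\ge 0$, so $p(A)X=Xp(-B)$ for every polynomial $p\in K[t]$. Taking $p=\chi_A$, the characteristic polynomial of $A$, the Cayley--Hamilton theorem yields $p(A)=0$, and hence $Xp(-B)=0$. The eigenvalues of $p(-B)$ are $\{p(\mu):\mu\in\sigma(-B)\}$ (spectral mapping), and by hypothesis no $\mu\in\sigma(-B)$ is a root of $p=\chi_A$, so $p(-B)$ is invertible. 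Therefore $X=0$, proving $T$ is injective.

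For the necessity direction, suppose $A$ and $-B$ do share an eigenvalue $\lambda$. Then $\lambda\in\sigma(A)$ and $-\lambda\in\sigma(B)$. Pick $u\neq 0$ with $Au=\lambda u$, and pick $v\neq 0$ with $v^{\!\top}B=-\lambda v^{\!\top}$ (such a left eigenvector exists since $B$ and $B^{\!\top}$ share the eigenvalue $-\lambda$). Setting $X=uv^{\!\top}\neq 0$, we compute
\begin{equation*}
AX+XB=(Au)v^{\!\top}+u(v^{\!\top}B)=\lambda uv^{\!\top}-\lambda uv^{\!\top}=0,
\end{equation*}
so $\ker T\neq \{0\}$ and $T$ fails to be bijective. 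Combining the two directions completes the proof.

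The only nontrivial step is the spectral-mapping fact that $p(-B)$ is invertible when the roots of $p$ miss $\sigma(-B)$; this is the main obstacle but is standard, and can be justified either via the Jordan form of $-B$ or by factoring $p(t)=c\prod(t-\alpha_i)$ with each $\alpha_i\in\sigma(A)$ and noting each factor $(-B-\alpha_i I)$ is invertible under the disjointness hypothesis. An alternative packaging of the whole argument uses the Kronecker product identity $\operatorname{vec}(AX+XB)=(I_m\otimes A+B^{\!\top}\otimes I_n)\operatorname{vec}(X)$, whose coefficient matrix has eigenvalues $\lambda_i+\mu_j$ for $\lambda_i\in\sigma(A)$, $\mu_j\in\sigma(B)$; this instantly reduces the theorem to the equivalence $\det(I_m\otimes A+B^{\!\top}\otimes I_n)\neq 0 \iff \lambda_i\neq -\mu_j$ for all $i,j$.
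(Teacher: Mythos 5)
Your proposal is correct, but note that the paper does not prove this statement at all: it is imported verbatim from the literature (cited as \cite{sylvester-original}) and used as a black box in Lemma \ref{lemma_twoblock} and elsewhere, so there is no in-paper argument to compare against. Your proof is the standard self-contained one and is sound: reducing existence-and-uniqueness for all $C$ to injectivity of the linear operator $T(X)=AX+XB$ on the finite-dimensional space $\mathbb{C}^{n\times m}$ is exactly the right move; the sufficiency direction via $A^kX=X(-B)^k$, Cayley--Hamilton, and the invertibility of $\chi_A(-B)$ (which you correctly justify by factoring $\chi_A$ over $\mathbb{C}$ into linear factors $(-B-\alpha_i I)$, each invertible since $\alpha_i\in\sigma(A)$ misses $\sigma(-B)$) is complete; and the necessity direction via the rank-one matrix $X=uv^{\top}$ built from a right eigenvector of $A$ and a left eigenvector of $B$ exhibits a nonzero element of $\ker T$, so uniqueness already fails for $C=0$. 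The Kronecker-product packaging you mention at the end is the other classical route and would also suffice, provided one proves (or cites) that the eigenvalues of $I_m\otimes A+B^{\top}\otimes I_n$ are the sums $\lambda_i+\mu_j$. Either version would serve as a legitimate proof of the cited theorem.
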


\begin{lemma} For any $g \in GL_n(K)$, $g Sol_{A} g^{-1} = Sol_{gAg^{-1}}$.
\end{lemma}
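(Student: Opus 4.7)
The plan is to prove this as a straightforward change-of-variables identity, by establishing the two inclusions $g\,Sol_A\,g^{-1} \subseteq Sol_{gAg^{-1}}$ and $Sol_{gAg^{-1}} \subseteq g\,Sol_A\,g^{-1}$ separately, with the second following from the first by symmetry.

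For the forward inclusion, I would take an arbitrary $X \in Sol_A$, set $A' = gAg^{-1}$ and $X' = gXg^{-1}$, and compute $A'X'A'$ by inserting the matching $g^{-1}g$ factors so that the middle collapses to $g(AXA)g^{-1}$. Applying the hypothesis $AXA = XAX$ and re-expanding gives $gXg^{-1} \cdot gAg^{-1} \cdot gXg^{-1} = X'A'X'$. This shows $gXg^{-1} \in Sol_{gAg^{-1}}$, so $g\,Sol_A\,g^{-1} \subseteq Sol_{gAg^{-1}}$.

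For the reverse inclusion, I would apply the forward inclusion I just proved with $g$ replaced by $g^{-1}$ and $A$ replaced by $gAg^{-1}$. This yields $g^{-1} Sol_{gAg^{-1}} g \subseteq Sol_{g^{-1}(gAg^{-1})g} = Sol_A$, and conjugating both sides by $g$ gives $Sol_{gAg^{-1}} \subseteq g\,Sol_A\,g^{-1}$, completing the equality.

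There is no real obstacle here; the content is just the associativity and cancellation of conjugation, and the proof is essentially one line of algebra plus a symmetry argument. The only thing worth being careful about is explicitly writing out the insertion of $g^{-1}g$ in the middle product so that the reader sees how the YBE identity for $A$ transports to the YBE identity for $gAg^{-1}$.
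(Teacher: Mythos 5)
Your proposal is correct and takes essentially the same route as the paper: a direct conjugation computation, inserting $g^{-1}g$ factors so that the identity $AXA=XAX$ transports to $gAg^{-1}$. In fact you are slightly more thorough, since the paper's proof only verifies the inclusion $g\,Sol_A\,g^{-1}\subseteq Sol_{gAg^{-1}}$ and leaves the reverse inclusion (which you obtain by rerunning the argument with $g^{-1}$ and $gAg^{-1}$) implicit.
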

\begin{proof}\label{lemma1}
Let $B= gAg^{-1}$, and $x \in g Sol_{A} g^{-1}$. Then $x= gXg^{-1}$, for some $X \in Sol_A$.
Now for $y= BxB = gAg^{-1} x gAg^{-1}$, that is $g^{-1} y g = A(g^{-1} x g)A
= AXA =XAX = g^{-1}xgAg^{-1}xg = g^{-1} xBx g$
that is, $y = xBx$.
which implies $BxB=xBx$
\end{proof}
As an immediate consequence of the above Lemma \ref{lemma1}, we have the following.
\begin{corollary}
Without loss of generality, we can assume A to be in the Jordan-canonical form.
\end{corollary}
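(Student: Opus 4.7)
The plan is very short: this corollary is essentially just Lemma \ref{lemma1} combined with the Jordan canonical form theorem. First, I would invoke Jordan canonical form to assert that, assuming $K$ is algebraically closed (or, more generally, that the characteristic polynomial of $A$ splits over $K$), there exists some $g \in GL_n(K)$ and a Jordan-canonical matrix $J$ such that $J = gAg^{-1}$. Second, I would apply Lemma \ref{lemma1} with this $g$ to conclude that $Sol_J = g\, Sol_A\, g^{-1}$. Conjugation by $g$ is a bijection $M_n(K)\to M_n(K)$, so it restricts to a bijection $Sol_A \to Sol_J$. Consequently a complete description of $Sol_J$ yields one of $Sol_A$ via the inverse map $X \mapsto g^{-1} X g$, which is exactly what ``without loss of generality, we may assume $A$ is in Jordan canonical form'' asserts.

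There is essentially no obstacle to overcome; the corollary is a packaging statement. The only point worth flagging explicitly in the write-up is the hypothesis on the ground field: Jordan canonical form requires the characteristic polynomial of $A$ to split, so if $K$ is not algebraically closed one should either pass to an algebraic closure $\bar K$ or replace the statement by the analogue using rational/primary canonical forms. Since the coefficient matrices in the subsequent theorems are assumed to have prescribed eigenvalues in $K$ (and the cited literature works over $\mathbb{C}$), this causes no actual loss of generality in the paper.
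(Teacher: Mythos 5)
Your proposal is correct and follows the same route as the paper: the corollary is stated there as an immediate consequence of Lemma \ref{lemma1}, exactly the conjugation-bijection argument $Sol_{gAg^{-1}} = g\,Sol_A\,g^{-1}$ that you give. Your explicit remark about the ground field (needing the characteristic polynomial to split, or passing to $\bar K$) is a reasonable extra precaution that the paper leaves implicit, but it does not change the argument.
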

\begin{corollary}
Let $X \in Sol_A$ , then for any $g \in G_A = \{g \in GL_n(K)/ gAg^{-1} =A\}$, $gXg^{-1}$ is a solution to the YBE, $AXA = XAX$.
\end{corollary}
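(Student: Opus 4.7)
The plan is to apply the preceding lemma directly, exploiting the defining property of $G_A$. By the lemma, for any $g \in GL_n(K)$ we have $g\,Sol_A\,g^{-1} = Sol_{gAg^{-1}}$. The hypothesis $g \in G_A$ means precisely that $gAg^{-1} = A$, so specialising the lemma at such a $g$ converts its right-hand side into $Sol_A$ itself.

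First I would unpack the set-theoretic identity $g\,Sol_A\,g^{-1} = Sol_A$ obtained from the specialisation above. Taking an arbitrary $X \in Sol_A$, the element $gXg^{-1}$ lies in $g\,Sol_A\,g^{-1}$, and hence by the identity it lies in $Sol_A$, which is exactly the claim $A(gXg^{-1})A = (gXg^{-1})A(gXg^{-1})$. If one wished to avoid invoking the lemma as a black box, the same computation could be repeated in miniature: conjugating the Yang-Baxter equation $AXA = XAX$ by $g$ yields $(gAg^{-1})(gXg^{-1})(gAg^{-1}) = (gXg^{-1})(gAg^{-1})(gXg^{-1})$, and replacing every occurrence of $gAg^{-1}$ by $A$ gives precisely the required identity.

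There is essentially no obstacle here, since the corollary is a tautological consequence of the centraliser condition combined with the lemma; the only care needed is to observe that $G_A$ is indeed a subgroup of $GL_n(K)$ so that $g^{-1}$ exists and lies in $G_A$ as well, which is immediate from $gAg^{-1} = A \iff A = g^{-1}Ag$. No Jordan-theoretic or Sylvester-equation machinery is needed for this step, and the result should be stated as a one-line deduction from Lemma~\ref{lemma1}.
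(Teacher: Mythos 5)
Your argument is correct and is essentially identical to the paper's proof: both specialise the lemma $g\,Sol_A\,g^{-1}=Sol_{gAg^{-1}}$ at $g\in G_A$, where $gAg^{-1}=A$ collapses the right-hand side to $Sol_A$, giving $gXg^{-1}\in Sol_A$. The extra remarks (the direct conjugation computation and the observation that $G_A$ is a subgroup) are fine but not needed.
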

\begin{proof}
Let $X \in Sol_A$. For a $g \in G_A$, we have $gSol_A g^{-1}\ =\ Sol_{gAg^{-1}} = Sol_A$.
\end{proof}
\begin{lemma} \label{first} Le $X \in Sol_A$ for the YBE with coefficient matrix $A$, and $\lambda$ be an eigenvalue of $A$ with eigenvector $v$, then either $\lambda$ is an eigenvalue of $X$ or $AXv = 0$. 
\end{lemma}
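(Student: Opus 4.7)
The plan is to substitute the eigenvector $v$ directly into the Yang--Baxter relation and read off the conclusion from a single vector identity. Starting from $AXA = XAX$ and applying both sides to $v$, the hypothesis $Av = \lambda v$ immediately turns the left side into
\[
AXAv \;=\; AX(\lambda v) \;=\; \lambda (AXv).
\]
On the right side I would regroup as $XAXv = X\bigl(A(Xv)\bigr) = X(AXv)$. Setting $u := AXv$, the entire YBE applied to $v$ collapses to the eigen-equation
\[
Xu \;=\; \lambda u.
\]

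From here the dichotomy is automatic. If $u = 0$, then $AXv = 0$, which is the second alternative in the statement. If $u \neq 0$, then $u$ is a (non-zero) vector satisfying $Xu = \lambda u$, so $\lambda$ is an eigenvalue of $X$ with eigenvector $u = AXv$, which is the first alternative.

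There is no real obstacle to this argument; it is essentially a one-line manipulation. The only thing worth highlighting in the write-up is the identification of the correct intermediate vector $u = AXv$, and the observation that the two cases of the conclusion correspond exactly to whether this vector vanishes or not. No appeal to the Sylvester theorem, the Jordan structure of $A$, or any of the earlier corollaries is needed for this lemma; it is a purely pointwise consequence of the defining relation.
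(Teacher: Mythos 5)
Your proof is correct and is essentially the same argument as the paper's: both apply the relation $AXA = XAX$ to the eigenvector $v$, use $Av=\lambda v$ to get $X(AXv)=\lambda(AXv)$, and conclude by distinguishing whether $AXv$ vanishes. Your write-up simply makes the intermediate vector $u=AXv$ explicit, which is a minor stylistic difference, not a different method.
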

\begin{proof}
We have, $Av=\lambda v$, thus $XAv=\lambda Xv$ or $AXAv=\lambda AXv$. By utilizing the YBE, we get $XAXv=\lambda AXv$. Thus, if $AXv \neq 0$, then $\lambda$ is an eigenvalue of $X$.
 Further, by the symmetry of the YBE, we can see, if $\lambda$ is an eigenvalue of $X$ with eigenvector $v$, then either $\lambda$ is an eigenvalue of $A$ or $XAv = 0$. 
\end{proof}

\begin{corollary}\label{cor.inclusion.of.spectrum}
Let coefficient matrix A be invertible and, $X \in Sol_A$, then $\sigma(X) \subseteq \sigma(A)\cup\{0\}$.
\end{corollary}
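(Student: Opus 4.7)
The plan is to argue by contradiction. Suppose $\lambda \in \sigma(X)$ with $\lambda \neq 0$ and $\lambda \notin \sigma(A)$, and let $v \neq 0$ be a corresponding eigenvector of $X$. The symmetric form of Lemma \ref{first} (the version stated in the last sentence of its proof) says: if $\lambda \in \sigma(X)$ has eigenvector $v$ then either $\lambda \in \sigma(A)$ or $XAv = 0$. Since $\lambda \notin \sigma(A)$ by assumption, we are forced into $XAv = 0$. Because $A$ is invertible, $Av$ is nonzero, and hence is a $0$-eigenvector of $X$.

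The key step is to iterate this observation along the $A$-orbit of $v$. Suppose inductively that $A^{k} v$ is a nonzero $0$-eigenvector of $X$ for some $k \geq 1$. Applying the symmetric Lemma once more to the eigenvalue $0$ of $X$ with eigenvector $A^{k} v$, the first alternative $0 \in \sigma(A)$ is ruled out by invertibility of $A$, so the second alternative $XA(A^{k} v) = 0$ must hold. Since $A^{k+1}v \neq 0$, we conclude that every $A^{k} v$ with $k \geq 1$ is a nonzero $0$-eigenvector of $X$; informally, the whole $A$-orbit of $v$ beyond $v$ itself lies in $\ker X$.

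To close the argument, invoke the Cayley--Hamilton theorem. Writing $\chi_A(t) = t^n + c_{n-1} t^{n-1} + \cdots + c_1 t + c_0$, we have $\chi_A(A) = 0$, with $c_0 = (-1)^n \det A \neq 0$ since $A$ is invertible. Applying $\chi_A(A)$ to $v$ and moving the $c_0 v$ term to the left yields $c_0 v = - \sum_{k \geq 1} c_k A^{k} v$, exhibiting $v$ as a linear combination of vectors in $\ker X$. Hence $Xv = 0$, contradicting $Xv = \lambda v \neq 0$. Therefore no such $\lambda$ exists, proving $\sigma(X) \subseteq \sigma(A) \cup \{0\}$.

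The conceptual step to identify is the iteration in the middle paragraph: one must recognise that the symmetric form of Lemma \ref{first} applies not only to the original eigenvalue $\lambda$ but also recursively to the $0$-eigenvalue it produces. Once this is noticed, invertibility of $A$ does double duty — it keeps each $A^{k} v$ nonzero so the induction does not collapse, and it supplies the nonzero constant term of $\chi_A$ that is needed to reconstruct $v$ from vectors in $\ker X$.
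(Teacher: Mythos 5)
Your proof is correct, but it is not the paper's argument --- and in fact it does more. The paper applies the symmetric form of Lemma \ref{first} exactly once: for $\lambda \in \sigma(X)\setminus\sigma(A)$ with eigenvector $v$ it deduces $XAv=0$, notes $Av\neq 0$ by invertibility, and stops with the observation that $Av$ is a $0$-eigenvector of $X$; as written, that only shows $0\in\sigma(X)$, not that the offending $\lambda$ itself lies in $\sigma(A)\cup\{0\}$. Your iteration supplies exactly the missing step: repeating the dichotomy at the eigenvalue $0$ (equivalently, invoking Lemma \ref{preserver}, which says $\ker X$ is $A$-invariant when $A$ is invertible) puts the whole forward orbit $A^{k}v$, $k\geq 1$, inside $\ker X$, and Cayley--Hamilton with constant term $(-1)^{n}\det A\neq 0$ then pulls $v$ itself into $\ker X$, contradicting $Xv=\lambda v\neq 0$. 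A slightly shorter way to close, once $Av\in\ker X$, is to use the full strength of Lemma \ref{preserver}: the dimension count there gives $A(\ker X)=\ker X$, so $v=A^{-1}(Av)\in\ker X$ at once, with no need for the characteristic polynomial. Either way, your write-up is a complete and correct proof of the corollary, reached by a genuinely different (and more thorough) route than the two-line argument in the paper.
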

\begin{proof}
From the above Lemma \ref{first}, we know that if $\lambda$ is an eigenvalue of $X$, then either it is an eigenvalue of $A$ or $XAv=0$, for $v$, an eigenvector of $X$, with the eigenvalue $\lambda$. Now, since $A$ is invertible, we get $Av \neq 0$. Thus, $XAv=0$, implies $Av$ is an eigenvector of $X$ with eigenvalue $0$. 
\end{proof}

\begin{lemma}\label{preserver}
If $det(A) \neq 0$, then $A$ preserves the kernel of $X$.
\end{lemma}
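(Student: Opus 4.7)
The plan is to take an arbitrary vector $v$ in $\ker(X)$ and show, by applying the YBE to $v$, that $Av$ is also killed by $X$. The invertibility of $A$ will then let me strip off a leading $A$ factor, which is the only place the hypothesis $\det(A) \neq 0$ actually enters.

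Concretely, I would begin by fixing $v \in \ker(X)$, so $Xv = 0$. Evaluating both sides of $AXA = XAX$ on $v$, the right-hand side gives
\begin{equation*}
XAX\,v = XA(Xv) = XA \cdot 0 = 0,
\end{equation*}
while the left-hand side is $AXAv$. Hence $AXAv = 0$. Since $\det(A) \neq 0$, $A$ is invertible and I may left-multiply by $A^{-1}$ to conclude $XAv = 0$, i.e.\ $Av \in \ker(X)$. As $v \in \ker(X)$ was arbitrary, this shows $A(\ker X) \subseteq \ker(X)$, which is exactly the statement that $A$ preserves $\ker(X)$.

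There is no real obstacle here: the lemma is a one-line consequence of the YBE together with the cancellation of $A$ on the left. The only subtlety worth flagging is that invertibility is essential; without it, $AXAv = 0$ would not imply $XAv = 0$, and the conclusion can genuinely fail. I would also remark (though not needed for the proof) that by the symmetry of the YBE the same argument shows $A$ preserves $\ker(X)$ on the right as well, i.e.\ if one were to consider $v^{T} X = 0$ one would get $v^{T} A \in \ker(X)^{*}$; this observation is not required for the statement but often useful in subsequent arguments.
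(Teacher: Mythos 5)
Your proof is correct and follows essentially the same route as the paper: take $v\in\ker(X)$, apply the YBE to get $AXAv = XAXv = 0$, and left-cancel the invertible $A$ to conclude $XAv=0$, hence $A(\ker X)\subseteq \ker X$. The paper additionally notes that invertibility of $A$ upgrades the inclusion to the equality $A(\ker X)=\ker X$ by a dimension count, but under the paper's stated definition of ``preserves'' your inclusion already suffices.
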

\begin{proof}
Let $v \in Ker(X)$, we have 
$Xv=0
\iff AXv=0  \implies XAXv=0
\iff AXAv=0
\iff XAv=0 $ $\Rightarrow Av \in Ker(X)$.
This gives, $A(Ker (X)) \subseteq Ker(X)$. Since $A$ is invertible, we get $dim (A(Ker (X))) = dim (Ker (X))$, gives, $A(Ker (X)) = Ker (X)$. 
\end{proof}

\begin{lemma}\label{main_lemma}
Let the coefficient matrix $A$ be invertible with $\lambda \in \sigma(A)$ and $E_{\lambda}$, the eigenspace of $A$ corresponding to $\lambda$, be of one dimension. If $Ker(X) = E_{\lambda}$, then $\lambda \notin \sigma(X)$.
\end{lemma}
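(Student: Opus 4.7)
The plan is by contradiction: I assume $\lambda$ is an eigenvalue of $X$ with eigenvector $w \neq 0$ and derive the contradiction $w \in \ker(X)$. Since $A$ is invertible we have $\lambda \neq 0$, so $Xw = \lambda w \neq 0$; thus $w \notin \ker(X)$, and showing $w \in \ker(X)$ will finish the argument. Let $v$ span the one-dimensional space $E_\lambda = \ker(X)$, so that $Av = \lambda v$ and $Xv = 0$.

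The first step is to locate $XAw$. Applying the YBE $AXA = XAX$ to $w$ and using $Xw = \lambda w$,
\[
A(XAw) = AXAw = XAXw = \lambda\,XAw,
\]
so $u := XAw$ satisfies $Au = \lambda u$, i.e., $u \in E_\lambda$. Hence $XAw = \beta v$ for some scalar $\beta$.

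The second step is to apply the YBE once more, this time to the vector $Aw$:
\[
AX(A^2 w) = (AXA)(Aw) = (XAX)(Aw) = XA(XAw) = XA(\beta v) = \beta\lambda\,Xv = 0.
\]
Invertibility of $A$ gives $X(A^2 w) = 0$, so $A^2 w \in \ker(X) = \mathrm{span}(v)$. Since $Av = \lambda v$ with $\lambda \neq 0$, the subspace $\mathrm{span}(v)$ is $A$-invariant with $A$ acting as multiplication by $\lambda$, hence $A^{-2}(\mathrm{span}(v)) = \mathrm{span}(v)$. Thus $w \in \mathrm{span}(v) = \ker(X)$, the desired contradiction.

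The only cleverness required is iterating the YBE twice: first on $w$ to pin $XAw$ inside the one-dimensional space $E_\lambda$, then on $Aw$ to exploit $Xv = 0$ and push $A^2 w$ into $\ker(X)$. I do not anticipate a serious obstacle, since once $A^2 w \in \ker(X)$ is established, the one-dimensionality of $E_\lambda$ together with invertibility of $A$ (which also yields Lemma \ref{preserver} in the background) finishes the argument mechanically.
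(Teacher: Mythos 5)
Your proof is correct and follows essentially the same route as the paper's: assume $\lambda \in \sigma(X)$ with eigenvector $w$, use the YBE once to place $XAw$ in $E_\lambda$, use it a second time (with invertibility of $A$) to get $A^2w \in \ker(X) = E_\lambda$, and conclude $w \in E_\lambda$, a contradiction. The only differences are cosmetic: you apply the YBE directly to $Aw$ and invoke $A$-invariance of $\mathrm{span}(v)$, where the paper writes the same computation as $X^2AXw = 0$ and divides by $\lambda$ explicitly.
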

\begin{proof}
If possible, let us assume $\lambda \in \sigma(X)$. Let $w$ be an eigenvector for $\lambda$, thus we have $Xw = \lambda w$. Now since $\lambda \in \sigma(A)$, let us choose an eigenvector for it, say $v$, we have $Av=\lambda v$. Since we have $E_{\lambda} = Ker(X)$ we obtain $Xv=0$. We also have $XAXw=\lambda XAw$, or $AXAw = \lambda XA w$, using the YBE. Which gives us $XAw \in E_{\lambda}$, now since the eigenspace is one dimensional (generated by $v$ as a basis), we have $XAw=\mu v$ for some $\mu$ (could also be zero). This gives us $X^2AXw=0$ or $X(XAX)w=0$. Or $XAX (Aw)=0$, which gives us $AXA^2w=0$, thus $XA^2w=0$ (as $A$ is invertible). So we have $A^2w \in ker(X)$. Now since $ker(X)=E_{\lambda}$ we have $A^2w=l v$ for some $l \in K$. So we have:
\begin{equation}
    \begin{aligned}
A^2w &=(lv=l/\lambda) Av\\
Aw &=(l/\lambda) v\\
w &= (l/\lambda) A^{-1}v\\
w &=(l/\lambda^2) v
 \end{aligned}
\end{equation}
Thus $w \in E_{\lambda}$ but then $Xw=0=\lambda w$ giving us $w=0$ a contradiction.

\end{proof}
\begin{corollary}\label{fact_two}
Let the coefficient matrix $A$ be invertible, and if  $\ \forall\ \lambda \in \sigma(A),\ dim(E_{\lambda})\ = 1$, and $X (E_{ \lambda }) \neq 0$, then $\sigma(A)\subseteq \sigma(X)$.
\end{corollary}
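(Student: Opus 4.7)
The plan is to deduce this essentially directly from Lemma \ref{first}. Fix an arbitrary $\lambda \in \sigma(A)$ and pick an eigenvector $v$; since $\dim(E_\lambda)=1$, the eigenspace is exactly $\mathrm{span}(v)$, so the hypothesis $X(E_\lambda) \neq 0$ forces $Xv \neq 0$.

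Next I would exploit invertibility of $A$ to promote this to a statement about $AXv$. Because $A$ has trivial kernel, $Xv \neq 0$ implies $AXv \neq 0$. Lemma \ref{first} then says that the alternative ``$AXv = 0$'' is ruled out, leaving the other alternative in that lemma, namely that $\lambda$ is an eigenvalue of $X$. Since $\lambda$ was arbitrary, this gives $\sigma(A) \subseteq \sigma(X)$.

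There is essentially no obstacle here — the corollary is really just packaging the hypotheses so that the escape clause ``$AXv = 0$'' in Lemma \ref{first} cannot occur. The only mild point to be careful about is that the hypothesis ``$X(E_\lambda) \neq 0$'' must be interpreted for every $\lambda \in \sigma(A)$ (which is how the statement reads via the universal quantifier on $\lambda$), so that the argument above can be applied uniformly across the spectrum of $A$.
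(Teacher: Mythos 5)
Your argument is correct and is essentially the paper's own proof: both deduce the claim directly from Lemma \ref{first}, applied to each $\lambda \in \sigma(A)$. You are in fact slightly more careful than the paper, since you explicitly use the invertibility of $A$ to pass from $Xv \neq 0$ to $AXv \neq 0$ and thereby rule out the escape clause of the lemma, a step the paper's two-line proof leaves implicit.
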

\begin{proof}
By the Lemma \ref{first}, if $X (E_{\lambda}) \neq 0$, then $\lambda \in \sigma(X)$. If this is true for all $\lambda \in \sigma(A)$, then we have $\sigma(A) \subseteq \sigma(X)$.
\end{proof}
\begin{lemma}\label{main_lemma_three}
Let A be an invertible coefficient matrix, $\lambda \in \sigma(A)$ with geometric multiplicity 1 and $\lambda \notin \sigma(X)$. If $P_{\lambda}$ is the generalized eigenspace of $A$ corresponding to $\lambda$ , then $X(P_{\lambda})=0$.
\end{lemma}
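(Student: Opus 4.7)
The plan is to use the filtration of the generalized eigenspace by the kernels of powers of $A - \lambda I$, and to proceed by induction on the level of this filtration. Write $V_k := \ker\bigl((A-\lambda I)^k\bigr)$, so that $V_1 = E_\lambda$ is the ordinary eigenspace, and $V_m = P_\lambda$ for $m$ the algebraic multiplicity of $\lambda$. The claim I would aim to prove by induction on $k$ is that $X(V_k) = 0$.

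For the base case $k=1$: if $v \in E_\lambda$, then $Av = \lambda v$, and Lemma~\ref{first} says either $\lambda \in \sigma(X)$ or $AXv = 0$. The first is ruled out by hypothesis, and the second together with the invertibility of $A$ gives $Xv = 0$. So $X(V_1) = 0$.

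For the inductive step: suppose $X(V_k) = 0$ and pick $w \in V_{k+1}$. Setting $w' := (A - \lambda I) w$, I have $w' \in V_k$, so by the inductive hypothesis $Xw' = 0$, which rearranges to the key identity $XAw = \lambda Xw$. Applying $A$ on the left and invoking the YBE, $AXAw = XAXw$, so $XAXw = \lambda AXw$. This says $AXw$ is either zero or an eigenvector of $X$ for eigenvalue $\lambda$; since $\lambda \notin \sigma(X)$, we must have $AXw = 0$, and invertibility of $A$ then forces $Xw = 0$. Hence $X(V_{k+1}) = 0$, closing the induction and yielding $X(P_\lambda) = X(V_m) = 0$.

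I do not expect any serious obstacle here: the geometric-multiplicity-one assumption is used only to make the chain $V_1 \subset V_2 \subset \cdots$ strictly grow up to $P_\lambda$, and the real work is the inductive step, which is a short two-line manipulation of the YBE after the clever choice $w' = (A-\lambda I)w$ to promote the inductive hypothesis from level $k$ to level $k+1$. The only points to be careful about are to explicitly invoke invertibility of $A$ each time I cancel an $A$, and to cite Lemma~\ref{first} (rather than re-deriving it) in the base step.
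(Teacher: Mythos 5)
Your proof is correct and takes essentially the same route as the paper's: the paper runs the identical induction along a Jordan chain basis $v_1,\dots,v_r$ of $P_\lambda$ (using $Xv_{i-1}=0$ to get $AXAv_i=\lambda AXv_i$, then the YBE and $\lambda\notin\sigma(X)$ to force $AXv_i=0$ and hence $Xv_i=0$ by invertibility of $A$), which is exactly your inductive step written in coordinates. A small bonus of your phrasing via the filtration $V_k=\ker\bigl((A-\lambda I)^k\bigr)$ is that the geometric-multiplicity-one hypothesis is never actually needed, whereas the paper's single-chain basis presupposes it.
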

\begin{proof}
Since $\lambda \notin \sigma(X)$, we know that $X$ annihilates the eigenspace of $\lambda$ so $E_{\lambda} \subset Ker(X)$. Let $\{v_{1},\dots, v_{r}\}$ be the canonical basis for the generalised eigen space $P_{\lambda}$, such that $Av_{1}= \lambda v_{1}$, and $Av_{i}= v_{i-1}+ \lambda v_{i}$, for $i=2,\dots,r$. Then we have, $Xv_{1}=0$. Now, $XAv_{2}= Xv_{1}+ \lambda Xv_{2}$, or $AXAv_{2}= \lambda AXv_{2}$. Then by YBE, $XAXv_{2}= \lambda AXv_{2}$. This gives, if $AXv_{2} \neq 0$, then $\lambda \in \sigma(X)$, which is not true. Hence $Xv_{2}=0$. Continuing this process, by induction we get $Xv_{i}=0$, for $i=1,2,\dots,r$. Which implies $XP_{\lambda} =0$.  
\end{proof}
\begin{corollary}\label{fact_one}
Let $A$ be an invertible coefficient matrix for the YBE, and for a $ \lambda \in \sigma(A)$, the Jordan block corresponding to $\lambda$, $J_\lambda$ has size strictly greater than 1 and has geometric multiplicity 1, then for any $X \in Sol_A$, such that $\lambda \notin \sigma(X)$, $Ker (X) \neq E_\lambda$.
\end{corollary}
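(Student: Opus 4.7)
The plan is to derive the statement as a short consequence of Lemma \ref{main_lemma_three}, which already tells us that the hypothesis $\lambda \notin \sigma(X)$ forces $X$ to vanish on the entire generalized eigenspace $P_\lambda$. The idea is simply to compare dimensions: $E_\lambda$ has dimension one (being the geometric eigenspace of a $\lambda$ with geometric multiplicity 1), while $P_\lambda$ has dimension equal to the size of $J_\lambda$, which is strictly greater than 1 by hypothesis.

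First I would invoke Lemma \ref{main_lemma_three}. Its hypotheses are exactly those we assume here: $A$ invertible, $\lambda \in \sigma(A)$ of geometric multiplicity 1, and $\lambda \notin \sigma(X)$. The conclusion gives $X(P_\lambda) = 0$, equivalently $P_\lambda \subseteq Ker(X)$. Next, I would record that $\dim(P_\lambda)$ equals the size of $J_\lambda$, which is at least $2$, whereas $\dim(E_\lambda) = 1$ by the geometric multiplicity assumption. Finally, combining these, $Ker(X) \supseteq P_\lambda$ has dimension at least $2$, so it cannot coincide with the one-dimensional $E_\lambda$.

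There is no real obstacle here — the content is entirely carried by Lemma \ref{main_lemma_three}, and this corollary is mainly a repackaging of that lemma into the contrapositive-flavored statement: if $\lambda$ is absent from $\sigma(X)$, then the kernel of $X$ is forced to be strictly larger than the eigenspace $E_\lambda$ whenever the Jordan block is non-trivial. The only subtlety worth stating explicitly in the write-up is the equality $\dim(P_\lambda) = \operatorname{size}(J_\lambda)$, which is standard for generalized eigenspaces when the block structure is fixed.
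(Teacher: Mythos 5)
Your proposal is correct and follows essentially the same route as the paper: it invokes Lemma \ref{main_lemma_three} to conclude $P_\lambda \subseteq Ker(X)$, and then compares $\dim(P_\lambda)$ (the size of $J_\lambda$, at least $2$) with $\dim(E_\lambda)=1$ to rule out $Ker(X)=E_\lambda$. No gaps; this matches the paper's argument.
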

\begin{proof}
If, for a $\lambda \in \sigma(A)$, $\lambda \notin \sigma(X)$, with $dim (E_\lambda) = 1$, then, by Lemma \ref{main_lemma_three}, we have $X$ annihilate any cyclic subspace of generalised eigenspace of $A$, corresponding to $\lambda$. Let $P_\lambda$ be the maximum cyclic subspace of such generalised eigenspace. Then, $P_\lambda \subseteq Ker(X)$.  Since, $dim (P_\lambda)$ is same as Jordan block corresponding to $\lambda$, which is strictly greater than one, and $E_\lambda$ has dimension one, we have $Ker (X) \neq E_\lambda$.    

\end{proof}

\begin{lemma}\label{both_invertible}
Let A be an invertible coefficient matrix, and $X \in Sol_A$ is also invertible, then $A \simeq X$. In particular, they have the same Jordan form.
\end{lemma}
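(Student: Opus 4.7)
The plan is to manipulate the YBE so that $A$ and $X$ appear on opposite sides of an honest similarity relation. Because both $A$ and $X$ are assumed invertible, we are free to multiply $AXA = XAX$ by $A^{-1}$ or $X^{-1}$ on either side. Left-multiplying by $X^{-1}$ gives $X^{-1}AXA = AX$, and then right-multiplying by $A^{-1}$ yields
\[
X^{-1} A X \;=\; A X A^{-1}.
\]

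Now read this identity twice. The left-hand side is $A$ conjugated by $X$, so it is similar to $A$ and therefore carries the Jordan form of $A$. The right-hand side is $X$ conjugated by $A^{-1}$, so it is similar to $X$ and carries the Jordan form of $X$. Since the two sides are equal as matrices, these two Jordan forms must coincide, and hence $A \simeq X$.

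There is essentially no obstacle in this argument: the whole proof rests on a single algebraic rearrangement together with the trivial fact that conjugate matrices share a Jordan form. Notably, the statement does not require any of the heavier spectral lemmas (such as Lemma \ref{first} or Corollary \ref{cor.inclusion.of.spectrum}) developed earlier; those would, at best, give only the weaker conclusion $\sigma(A) = \sigma(X)$, which is not strong enough to pin down the Jordan structure.
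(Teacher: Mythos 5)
Your proposal is correct and is essentially the paper's own argument: the paper's proof consists of exactly the rearrangement $AXA = XAX \iff X^{-1}AX = AXA^{-1}$, from which similarity of $A$ and $X$ follows since the left side is conjugate to $A$ and the right side to $X$. You have simply spelled out the two multiplications and the transitivity step that the paper leaves implicit.
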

\begin{proof}
The result follows from the fact, \[AXA=XAX \iff X^{-1}AX=AXA^{-1}\]
\end{proof}
\begin{lemma}\label{second}
If the coefficient matrix $A$ is a Jordan block, $\lambda I +B$, such that $\lambda \neq 0$, and if a solution of the YB equation $X$ is not the zero matrix, then $\sigma(X) = \{\lambda\}$.
\end{lemma}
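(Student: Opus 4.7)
The plan is to reduce everything to showing that every nonzero $X \in Sol_A$ is invertible; once that is established, Lemma \ref{both_invertible} gives $X \simeq A$ and hence $\sigma(X) = \sigma(A) = \{\lambda\}$. Since $A$ is invertible with $\sigma(A) = \{\lambda\}$, Corollary \ref{cor.inclusion.of.spectrum} already forces $\sigma(X) \subseteq \{0,\lambda\}$, so the entire task collapses to ruling out $0 \in \sigma(X)$ when $X \neq 0$.

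First, I would set up the contradiction by assuming $X \neq 0$ and $\ker X \neq 0$. By Lemma \ref{preserver}, $\ker X$ is $A$-invariant. The complementary structural input I would draw from YBE is that $\mathrm{Image}(X)$ is also $A$-invariant: computing the image of both sides of $AXA = XAX$ in $K^n$ and using $A(K^n)=K^n$ yields
\[
A\bigl(\mathrm{Image}(X)\bigr) \;=\; X\bigl(A(\mathrm{Image}(X))\bigr),
\]
so $A(\mathrm{Image}(X)) \subseteq \mathrm{Image}(X)$, and invertibility of $A$ promotes this to equality. Substituting back into the displayed identity then gives $X(\mathrm{Image}(X)) = \mathrm{Image}(X)$, i.e., $X$ restricts to a self-surjection on $\mathrm{Image}(X)$.

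Next, the lattice of $A$-invariant subspaces of a single Jordan block is a complete flag $\{0\}=V_0 \subset V_1 \subset \cdots \subset V_n = K^n$, where $V_j = \mathrm{span}(e_1,\ldots,e_j)$. Thus $\ker X = V_k$ for some $1 \leq k \leq n-1$, and rank-nullity combined with the flag rigidity forces $\mathrm{Image}(X) = V_{n-k}$. A two-case comparison closes the argument: if $k \leq n-k$, then $V_k \subseteq V_{n-k}$, so the self-surjection $X|_{V_{n-k}}$ has nontrivial kernel $V_k$, which is impossible in finite dimension; if $k > n-k$, then $V_{n-k} \subseteq V_k = \ker X$ forces $0 = X(V_{n-k}) = V_{n-k}$, so $k=n$ and $X=0$, against assumption.

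The step I expect to be the real obstacle in a clean write-up is extracting from YBE both the $A$-invariance of $\mathrm{Image}(X)$ and the self-surjectivity $X(\mathrm{Image}(X)) = \mathrm{Image}(X)$; these are exactly what pin $\ker X$ and $\mathrm{Image}(X)$ rigidly against one another on the Jordan flag. Once they are secured, the flag comparison is a few lines of dimension counting.
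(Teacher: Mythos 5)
Your argument is correct, but it takes a genuinely different route from the paper's. The paper's proof gets $\sigma(X)\subseteq\{0,\lambda\}$ from Corollary \ref{cor.inclusion.of.spectrum}, uses Lemma \ref{preserver} to make $\ker X$ an $A$-invariant subspace, asserts the dichotomy $\ker X=E_{\lambda}$ or $K^n$, and eliminates the case $\ker X=E_{\lambda}$ via Lemma \ref{main_lemma} (which forces $\lambda\notin\sigma(X)$ and hence $X=0$). You instead prove the stronger fact that every nonzero solution is invertible: taking images of both sides of $AXA=XAX$ and using invertibility of $A$ (here is where $\lambda\neq 0$ enters) gives $A(\mathrm{Image}(X))=\mathrm{Image}(X)$ and $X(\mathrm{Image}(X))=\mathrm{Image}(X)$, so both $\ker X$ and $\mathrm{Image}(X)$ sit on the complete flag $V_0\subset V_1\subset\dots\subset V_n$ of invariant subspaces of a single Jordan block as $V_k$ and $V_{n-k}$, and your two-case dimension comparison (surjective endomorphism of a finite-dimensional space is injective, respectively $X(V_{n-k})=V_{n-k}=0$) rules out $1\le k\le n-1$; Lemma \ref{both_invertible} then gives $X\simeq A$ and $\sigma(X)=\{\lambda\}$. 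Each step checks out. A concrete payoff of your version is that it explicitly disposes of the intermediate flag members $V_2,\dots,V_{n-1}$ as possible kernels, which the paper's stated dichotomy ``$\ker X=E_{\lambda}$ or $K^n$'' passes over without justification, since the invariant subspaces of a Jordan block form the full flag rather than just $\{0,E_{\lambda},K^n\}$; the paper's route, in exchange, is shorter and reuses its spectral lemmas, while yours is more elementary and also yields part (2) of Theorem \ref{main_thm_2} (singular solutions are zero) as an immediate byproduct.
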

\begin{proof}
We have $\sigma(X) \subseteq \sigma(A) \cup \{0\}$, and  $\sigma(A) = \{\lambda\}$. Also, we have $A(Ker(X)) \subset Ker(X)$, by Corollary \ref{cor.inclusion.of.spectrum}. So, $Ker(X)$ is an invariant subspace of $A$. i.e., $Ker(X) = E_{\lambda}\mbox{ or } K^n$, as $\lambda$ is the only eigenvalue of $A$, $P_{\lambda} = K^n$. Now, if $Ker(X) = K^n$, then $X = 0$. If $Ker(X) = E_{\lambda}$, then by Lemma \ref{main_lemma}, we have $\lambda \notin \sigma(X)$; which gives $\sigma(X) \subset \{0\}$. i.e., $X = 0$. As a result, if $X \neq 0$, then $\sigma(X) = \{\lambda\}$.
\end{proof}
\begin{lemma}\label{powerlemma}
$AXA=XAX$ if and only if $AXA^n =X^nAX$ for all $n \in \mathbb{N}$. Similarly, $AXA=XAX$ if and only if  $A^nXA=XAX^n, \ \forall\  n \in \mathbb{N}$.
\end{lemma}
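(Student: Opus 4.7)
The plan is to prove both statements by straightforward induction on $n$, since the YBE $AXA = XAX$ is exactly the $n=1$ case of each. The reverse implication in both equivalences is trivial: setting $n=1$ in $AXA^n = X^nAX$ (resp.\ $A^nXA = XAX^n$) immediately recovers the YBE. So the content of the lemma is the forward implication.

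For the first statement, I would induct on $n$, with base case $n=1$ being the YBE itself. For the inductive step, assuming $AXA^n = X^nAX$, I would write
\begin{equation*}
AXA^{n+1} = (AXA^n)\,A = (X^nAX)\,A = X^n(AXA) = X^n(XAX) = X^{n+1}AX,
\end{equation*}
where the middle equality uses the inductive hypothesis and the penultimate one uses the YBE. The second statement is proved symmetrically: assuming $A^nXA = XAX^n$, I would compute
\begin{equation*}
A^{n+1}XA = A(A^nXA) = A(XAX^n) = (AXA)X^n = (XAX)X^n = XAX^{n+1}.
\end{equation*}

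There is no real obstacle here; the argument is a one-line induction in each case, using associativity together with one application of the YBE per step. The only thing worth being careful about is the placement of the parentheses: in the first identity the YBE is applied on the right end of the product ($AXA \to XAX$), whereas in the second it is applied on the left end. This is also the reason the two identities are genuinely different rather than a single identity written two ways. If desired, one could package both statements together by remarking that the argument only uses the YBE to rewrite a single occurrence of $AXA$ as $XAX$, which can be done at either end of a power of $A$ or $X$.
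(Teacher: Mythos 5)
Your proof is correct and matches the paper's argument: the paper also notes the reverse direction is just the case $n=1$ and proves the forward direction by the same one-step induction, rewriting $AXA^{n+1}=(AXA^n)A$ via the inductive hypothesis and one application of the YBE (and symmetrically for $A^nXA=XAX^n$). No issues to report.
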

\begin{proof}
\[AXA=XAX \]
\[ \Rightarrow AXA^2=XAXA=X^2AX. \] 
Then by induction, we have, $\forall\ n \in \mathbb{N}$, $AXA^n=X^nAX$. Conversely, if $AXA^n=X^nAX$, is true for any $n \in \mathbb{N}$, then in particular, it is true for $n = 1$. i.e., $AXA = XAX$. Similar way, we can prove,  $AXA=XAX \ \iff\ A^nXA=XAX^n, \ \forall\  n \in \mathbb{N}$ 
\end{proof}
\begin{lemma}
If $XAX = AXA$, and $X$ and A commute, then $(I-e^{t(A-X)})XA = 0$, for any $t \in \mathbb{R}$.
\end{lemma}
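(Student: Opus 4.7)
The plan is to exploit commutativity to reduce the YBE to a single algebraic identity $(A-X)XA = 0$, and then observe that this identity annihilates every term of the Taylor expansion of $e^{t(A-X)}$ beyond the constant one.

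First I would rewrite the YBE using $AX = XA$. Because $A$ and $X$ commute, $AXA = A^2X$ and $XAX = AX^2$, so the equation $AXA = XAX$ collapses to $A^2X = AX^2$, equivalently $A(A-X)X = 0$. Using commutativity once more this can be rearranged as $(A-X)XA = 0$ (the three factors $A$, $X$, $A-X$ all commute pairwise).

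Next I would use the power series $e^{t(A-X)} = \sum_{n=0}^{\infty} \frac{t^n}{n!}(A-X)^n$, so that
\begin{equation*}
\bigl(I - e^{t(A-X)}\bigr)XA \;=\; -\sum_{n=1}^{\infty} \frac{t^n}{n!}\,(A-X)^n XA.
\end{equation*}
For every $n \geq 1$, I would factor $(A-X)^n XA = (A-X)^{n-1}\bigl[(A-X)XA\bigr]$, and the bracketed expression vanishes by the previous step. Hence every term in the sum is zero, which gives the desired conclusion.

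The argument is essentially one line once the right reformulation is made; the only subtle point is being careful with the order of $A$, $X$, and $A-X$ when re-expressing $A(A-X)X$ as $(A-X)XA$, but this is free because all three factors commute. There is no real obstacle once commutativity has been used to reduce the nonlinear YBE to the single annihilation relation $(A-X)XA = 0$.
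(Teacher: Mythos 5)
Your proof is correct, but it takes a genuinely different route from the paper's. You use commutativity at the very start to collapse the YBE to the single algebraic identity $A^2X = AX^2$, i.e.\ $(A-X)XA=0$ (legitimate, since $A$, $X$, and $A-X$ commute pairwise), and then annihilate every term of order $n\ge 1$ in the series for $e^{t(A-X)}$ by factoring $(A-X)^nXA=(A-X)^{n-1}\bigl[(A-X)XA\bigr]$. The paper instead invokes its earlier power lemma ($AXA=XAX$ iff $A^nXA=XAX^n$ for all $n$), which yields the intertwining relation $XAe^{tX}=e^{tA}XA$ \emph{without} any commutativity assumption, and only then brings in commutativity (together with the YBE) to pass from that relation to $XA=e^{t(A-X)}XA$. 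Your argument is more elementary and self-contained, and it actually establishes the stronger pointwise identity $(A-X)XA=0$, from which the exponential statement is immediate; the paper's route is less direct but exhibits the general exponential intertwining identity valid for arbitrary (not necessarily commuting) solutions, with commutativity entering only at the final step.
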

\begin{proof}
If $XAX = AXA$, then as a consequence of above lemma, we have $XAe^{tX} = e^{tA}XA$ for any $t \in \mathbb{R}$. \\
Which gives 
\begin{align*}
    XAe^{tX}X &= e^{tA}XAX\\
    XAXe^{tX} &= e^{tA}XAX\\
    AXAe^{tX} &= e^{tA}AXA\\
              &= Ae^{tA}XA\\
\end{align*}
This gives $XA = e^{t(A-X)}XA$, implies $(I-e^{t(A-X)})XA = 0$.
\end{proof}
\begin{lemma}
    Let $X \in Sol_A$, be a commuting solution, such that $A - X$ is invertible, then $AX = 0$.
\end{lemma}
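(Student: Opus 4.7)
The plan is to leverage the preceding lemma, whose conclusion is that $(I - e^{t(A-X)})XA = 0$ for every $t \in \mathbb{R}$. Rewriting this identity as $e^{t(A-X)}(XA) = XA$, I would read it as saying that the range of $XA$ is pointwise fixed by the one-parameter group $\{e^{t(A-X)}\}_{t \in \mathbb{R}}$.

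The next step is to differentiate both sides of $e^{t(A-X)}(XA) = XA$ with respect to $t$ and evaluate at $t = 0$. Using $\frac{d}{dt}e^{tM}\big|_{t=0} = M$, this yields $(A-X)\,XA = 0$. Since $A - X$ is invertible by hypothesis, multiplying by $(A-X)^{-1}$ on the left gives $XA = 0$, and the commutativity assumption then upgrades this to $AX = XA = 0$, which is the claim.

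A side remark, and a sanity check, is that there is a purely algebraic route that avoids the matrix exponential: commutativity collapses the YBE $AXA = XAX$ into $A^{2}X = X^{2}A$, which rearranges as $X(A-X)A = 0$, and since $X$ commutes with $A-X$ this is $(A-X)XA = 0$, after which invertibility of $A-X$ closes the argument in exactly the same way. Given that the preceding lemma is stated immediately before, I would present the exponential version as the main proof, so as to chain the two lemmas directly. There is essentially no obstacle here; the only subtlety is making sure the differentiation at $t = 0$ is applied to an identity of matrix-valued functions, where term-by-term differentiation of the power series for $e^{t(A-X)}$ is justified by absolute convergence, so the step $(A-X)XA = 0$ is rigorous.
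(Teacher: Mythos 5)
Your proof is correct, but it takes a genuinely different route from the paper's. The paper also begins with the commuting form of the YBE, $A^{2}X = AX^{2}$, but then sets $Y = AX$, reads this as the Sylvester equation $AY - YX = 0$, and invokes Sylvester's theorem (Theorem \ref{sylvester}) to force $Y = 0$ under the condition $\sigma(A)\cap\sigma(X) = \varnothing$, which it then identifies with invertibility of $A - X$. Your argument --- whether in the exponential form, differentiating $e^{t(A-X)}XA = XA$ at $t = 0$ to obtain $(A-X)XA = 0$, or in the purely algebraic form $(A-X)XA = A^{2}X - X^{2}A = 0$ --- uses the stated hypothesis ``$A-X$ invertible'' directly and needs no spectral information. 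That is a real advantage: for commuting matrices, disjointness of $\sigma(A)$ and $\sigma(X)$ implies $A-X$ is invertible, but the converse fails (e.g.\ $A = \mathrm{diag}(1,2)$, $X = \mathrm{diag}(2,1)$ commute and $A-X$ is invertible while the spectra coincide), so the paper's Sylvester route really establishes the lemma under a stronger hypothesis than the one stated, whereas your factorization closes it exactly as stated. Your algebraic variant also works over an arbitrary field and dispenses with the matrix exponential altogether; if you keep the exponential version as the main proof in order to chain with the preceding lemma, be aware that it ties you to $\mathbb{R}$ or $\mathbb{C}$, and the one-line identity $(A-X)XA = 0$ is all that is actually needed.
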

\begin{proof}
    If $X$ is a commuting solution in $Sol_A$, then we have, $A^2X = AX^2$. By taking $Y= AX$, we have, $AY - YX = 0$. By Sylvester's Theorem \ref{sylvester}, $Y = 0$ if and only if, $\sigma(A) \cap \sigma(X) = \varnothing$, i.e., $A-X$ is invertible.
\end{proof}
\begin{theorem}
Let $\phi_A(x)$ be the characteristic polynomial of A (respectively $\phi_X(x)$ be the characteristic polynomial of X ). Then the following is true,
\begin{itemize}
    \item[i]$XA\phi_A(X) = 0$
    \item[ii] $\phi_A(X)AX =0$
\end{itemize}
\end{theorem}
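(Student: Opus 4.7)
The plan is to reduce everything to Lemma \ref{powerlemma} and Cayley--Hamilton. Write the characteristic polynomial as $\phi_A(x) = \sum_{k=0}^{n} c_k x^k$, and recall that by Cayley--Hamilton $\phi_A(A) = 0$.

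For part (i), I would expand
\[
XA\,\phi_A(X) = \sum_{k=0}^{n} c_k\, XA X^k.
\]
Now Lemma \ref{powerlemma} (second form) says exactly that $XAX^k = A^k XA$ for all $k \in \mathbb{N}$ (with $k=0$ being trivial). Substituting this factor by factor collapses the sum to
\[
XA\,\phi_A(X) = \sum_{k=0}^{n} c_k\, A^k XA = \phi_A(A)\,XA = 0.
\]

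For part (ii) the argument is symmetric: expanding $\phi_A(X)AX = \sum_k c_k X^k AX$ and applying the first form of Lemma \ref{powerlemma}, namely $X^k AX = AXA^k$, yields
\[
\phi_A(X)AX = \sum_{k=0}^{n} c_k\, AXA^k = AX\,\phi_A(A) = 0.
\]

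Honestly there is no genuine obstacle here; the entire content is the observation that Lemma \ref{powerlemma} lets us migrate each power of $X$ across the central factor $A$ and convert it into the same power of $A$ on the opposite side, after which Cayley--Hamilton finishes the job. The only thing one has to be careful about is respecting the asymmetric placement of the outer $X$ and $A$ (that is, using $XAX^k = A^kXA$ for (i) and $X^kAX = AXA^k$ for (ii) rather than confusing the two), since the two identities in Lemma \ref{powerlemma} are not interchangeable.
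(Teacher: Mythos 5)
Your proposal is correct and is essentially the paper's own proof: expand $\phi_A$, use Lemma \ref{powerlemma} to replace $XAX^k$ by $A^kXA$ (resp.\ $X^kAX$ by $AXA^k$), and conclude via Cayley--Hamilton since $\phi_A(A)=0$. The paper writes out part (i) and dismisses (ii) with ``similarly,'' whereas you correctly spell out which of the two identities in Lemma \ref{powerlemma} is needed in each part; no substantive difference.
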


\begin{proof}
Let\[ \phi_A(x)= a_0+a_1x+\dots+a_nx^n. \] Then we have,

    \begin{align*}
 XA\phi_A(X) &= a_0XA+a_1XAX+a_2XAX^2+\dots+ a_nXAX^n \\
 &= a_0XA+a_1AXA+a_2A^2XA+\dots+ a_nA^nXA \\
 &= (a_0I+a_1A+a_2A^2+\dots+ a_nA^n)XA \\
 &= \phi_A(A)XA=0 \\
 \end{align*}
Similarly, we have $\phi_A(X)AX=0$.
\end{proof}
\begin{corollary}
If $A^n=0$, then $XAX^m=X^mAX=0$, for every $ m \geq n$.
\end{corollary}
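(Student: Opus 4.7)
The plan is to reduce the corollary directly to the theorem immediately above (or, equivalently, to Lemma~\ref{powerlemma}). First I would observe that if $A \in M_n(K)$ satisfies $A^n = 0$, then $A$ is nilpotent, every eigenvalue of $A$ is zero, and Cayley--Hamilton forces the characteristic polynomial to be $\phi_A(x) = x^n$.

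With that identification in hand, parts (i) and (ii) of the preceding theorem give the base case $m=n$ immediately:
\[
XAX^n \;=\; XA\,\phi_A(X) \;=\; 0, \qquad X^nAX \;=\; \phi_A(X)\,AX \;=\; 0.
\]
Alternatively, one can bypass $\phi_A$ altogether and invoke Lemma~\ref{powerlemma}: the identity $AXA^n = X^nAX$ together with $A^n = 0$ yields $X^nAX = 0$, and the symmetric identity $A^nXA = XAX^n$ yields $XAX^n = 0$.

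For the extension to general $m \geq n$, I would simply right-multiply the first base identity, and left-multiply the second, by $X^{m-n}$:
\[
XAX^m \;=\; (XAX^n)\,X^{m-n} \;=\; 0, \qquad X^mAX \;=\; X^{m-n}(X^nAX) \;=\; 0.
\]
There is no real obstacle; the entire argument is a substitution plus a one-line multiplication. The only point that needs a brief word of care is aligning the two roles of ``$n$'' in the statement, namely that the index of nilpotency of $A$ matches the degree of $\phi_A$ (which is automatic once we note that a nilpotent $n\times n$ matrix has $\phi_A(x) = x^n$), so that the previous theorem applies cleanly.
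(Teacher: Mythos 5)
Your proof is correct and is essentially the derivation the paper intends: the corollary is stated right after the theorem $XA\phi_A(X)=0=\phi_A(X)AX$ (with no separate proof given), and your observation that a nilpotent $A\in M_n(K)$ has $\phi_A(x)=x^n$, followed by multiplication with $X^{m-n}$, fills in exactly that step; the power-lemma route you mention works just as well (indeed $AXA^m=X^mAX$ with $A^m=0$ gives the case of general $m\geq n$ in one line).
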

\begin{lemma}
Let $A, X$ be two square matrices of the same dimension. $\phi_A(X)$ is invertible if and only if $\sigma(A) \cap \sigma(X)= \varnothing$.
\end{lemma}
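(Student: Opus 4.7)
The plan is to reduce the statement to a determinant computation by factoring $\phi_A$ into linear factors over the algebraic closure $\overline{K}$. Since invertibility of a matrix in $M_n(K)$ is equivalent to its determinant (computed in $K$) being nonzero, and the determinant is unchanged under extension of scalars, it suffices to work over $\overline{K}$ throughout.

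First, I would write $\phi_A(x) = \prod_{i=1}^{n} (x - \lambda_i)$ over $\overline{K}$, where $\lambda_1, \ldots, \lambda_n$ are the eigenvalues of $A$ listed with algebraic multiplicity. Since the factors commute (they are all polynomials in $X$), we obtain
\begin{equation*}
\phi_A(X) = \prod_{i=1}^{n} (X - \lambda_i I).
\end{equation*}
Taking determinants and using the identity $\det(X - \lambda_i I) = (-1)^n \det(\lambda_i I - X) = (-1)^n \phi_X(\lambda_i)$, I get
\begin{equation*}
\det\bigl(\phi_A(X)\bigr) \;=\; (-1)^{n^2} \prod_{i=1}^{n} \phi_X(\lambda_i).
\end{equation*}

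From here the equivalence drops out immediately: $\phi_A(X)$ is invertible if and only if $\det(\phi_A(X)) \neq 0$, which holds if and only if $\phi_X(\lambda_i) \neq 0$ for every $i$. But $\phi_X(\lambda_i) = 0$ precisely when $\lambda_i$ is an eigenvalue of $X$, so the condition is $\lambda_i \notin \sigma(X)$ for every eigenvalue $\lambda_i$ of $A$, i.e.\ $\sigma(A) \cap \sigma(X) = \varnothing$.

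I do not expect any serious obstacle; the only care needed is the scalar-extension point (working over $\overline{K}$ to factor $\phi_A$ while noting that invertibility is detected by the determinant, which is the same in $K$ and $\overline{K}$), together with the standard sign bookkeeping in $\det(X - \lambda_i I) = (-1)^n \phi_X(\lambda_i)$. The argument is essentially a one-line determinant identity once the factorization is in place.
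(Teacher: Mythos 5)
Your proof is correct and takes essentially the same route as the paper: both factor $\phi_A(X) = (X-\lambda_1 I)\cdots(X-\lambda_n I)$ and reduce invertibility to the condition $\lambda_i \notin \sigma(X)$ for every eigenvalue $\lambda_i$ of $A$. Your explicit determinant identity $\det\bigl(\phi_A(X)\bigr) = (-1)^{n^2}\prod_i \phi_X(\lambda_i)$ and the remark about extending scalars to $\overline{K}$ simply make precise what the paper's one-line argument leaves implicit.
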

\begin{proof}
Let $\phi_A(X)= (X-\lambda_1I) \dots (X-\lambda_nI)$. Then $\phi_A(X)$ is invertible if and only if for any $i$,  $\lambda_i \notin \sigma(X)$.
\end{proof}
\begin{theorem}\label{zero-solution}
Let $AXA=XAX$, if $\sigma(A) \cap \sigma(X)= \varnothing$, then either
\begin{itemize}
    \item[i] $A=0$ and $X$ is invertible\\
    or
    \item[ii] $A$ is invertible and $X=0$.
\end{itemize}
\end{theorem}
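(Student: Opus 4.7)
The plan is to reduce the theorem to the single identity $AX=0$ and then finish by a short case analysis on whether $0\in\sigma(A)$. The reduction is essentially free from the two statements immediately above: by the lemma just above, $\phi_A(X)$ is invertible precisely because $\sigma(A)\cap\sigma(X)=\varnothing$, and the preceding theorem gives $\phi_A(X)\,AX = 0$. Left-multiplying by $\phi_A(X)^{-1}$ yields $AX=0$. The companion identity $XA=0$ is also available from the other half of the theorem, but it will not be needed for the dichotomy.

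With $AX=0$ in hand I would split into two cases. If $0\notin\sigma(A)$, then $A$ is invertible and $AX=0$ forces $X=0$, which is conclusion (ii). If instead $0\in\sigma(A)$, the disjointness hypothesis yields $0\notin\sigma(X)$, so $X$ is invertible; then $AX=0$ forces $A=0$, which together with $X$ invertible is conclusion (i). The two cases are exhaustive, and each conclusion is automatically consistent with the disjoint-spectra hypothesis ($A=0$ has spectrum $\{0\}$, which forces $X$ invertible; $X=0$ has spectrum $\{0\}$, which forces $A$ invertible).

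I do not anticipate any real obstacle here; the theorem is essentially a repackaging of the two items directly preceding it. The one thing worth keeping clearly in mind is that the YBE itself enters only once, implicitly through the previous theorem that produced $\phi_A(X)\,AX=0$: after that reduction step the equation $AXA=XAX$ plays no further role, and one should resist the temptation to try to reintroduce commutation relations between $A$ and $X$ in the case analysis.
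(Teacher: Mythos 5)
Your proposal is correct and takes essentially the same route as the paper: the disjoint-spectra hypothesis makes $\phi_A(X)$ invertible, and combining this with $\phi_A(X)AX=0$ (the paper symmetrically also uses $XA\phi_A(X)=0$) reduces everything to $AX=0$ (resp.\ $XA=0$), after which the split on whether $0\in\sigma(A)$ finishes. Your organization is in fact slightly leaner, since deriving $AX=0$ once up front makes the paper's preliminary step ruling out the both-invertible case via $X\simeq A$ unnecessary.
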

\begin{proof}
As $\sigma(A) \cap \sigma(X)= \varnothing$, only atmost one of $\sigma(A), \sigma(X)$ can contain 0. If none of them contains 0, then both $A, X$ are invertible and hence $X\simeq A$, which implies $\sigma(A) = \sigma(X)$, which is not possible.
Therefore, one must contain 0. 
If $0 \in \sigma(A)$, then  $0 \notin \sigma(X)$ and X is invertible. Also, since $\sigma(A) \cap \sigma(X)= \varnothing$, we have $min_A, min_X$ are co-prime. As a result, $\phi_A(X) $ is invertible.
Then, \[XA\phi_A(X)= \phi_A(X)AX = 0 \\
\Longrightarrow XA= 0.\] Since X is invertible, A must be 0.
Similarly, if $0 \in \sigma(X)$, then $X=0$ and A is invertible.  
\end{proof}

\begin{theorem}\label{main_thm_2}
If $A=\lambda I+B$ is a Jordan block, then for the YBE, $AXA = XAX$, the following are true.
\begin{enumerate}
    \item If $\lambda \neq 0$ and $det(X) \neq 0$ then $X\simeq A$.
    \item If $\lambda \neq 0$ and $det(X)=0$ then $X=0$.
    \item If $\lambda =0$, then the YBE do not have any invertible solution X.  
\end{enumerate}
\end{theorem}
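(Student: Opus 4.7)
The plan is to dispatch each of the three cases by invoking the lemmas already assembled in the preliminaries; essentially every piece of work is done, and the theorem is a short harvest.

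For part (1), I would simply note that both $A$ and $X$ are invertible by hypothesis, so Lemma~\ref{both_invertible} applies directly and gives $X \simeq A$. There is nothing further to check.

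For part (2), the strategy is a one-line application of Lemma~\ref{second}. Suppose toward contradiction that $X \neq 0$. Since $A = \lambda I + B$ is a Jordan block with $\lambda \neq 0$, Lemma~\ref{second} forces $\sigma(X) = \{\lambda\}$. In particular $0 \notin \sigma(X)$, so $\det(X) \neq 0$, contradicting the hypothesis. Hence $X = 0$.

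For part (3), set $A = B$ (the nilpotent Jordan block), and suppose for contradiction that $X \in \mathrm{Sol}_A$ is invertible. Then $A$ itself is not invertible, so we cannot quote Lemma~\ref{both_invertible} with $A$ in the invertible slot; however, the YBE is symmetric in the roles of $A$ and $X$ up to rewriting, and applying Lemma~\ref{both_invertible} with the roles reversed (i.e.\ taking $X$ as the invertible coefficient and $A$ as a solution of $XAX = AXA$) gives $A \simeq X$. But $A$ is nilpotent while $X$ is invertible, a contradiction. Alternatively, one can argue more directly: since $X$ is invertible, $XAX = AXA$ yields $A = X^{-1}AXAX^{-1}$, so $A$ and $AXAX^{-1}$ are similar, and iterating the YBE via Lemma~\ref{powerlemma} shows $A^n X A = X A X^n$ for every $n$; taking $n$ equal to the nilpotency index of $A$ gives $XAX^n = 0$, and since $X$ is invertible this forces $A = 0$, which contradicts $A$ being a Jordan block of positive size. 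Either route suffices.

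The only real subtlety is in part (3), where one must be a little careful because Lemma~\ref{both_invertible} was stated under the hypothesis that the coefficient matrix is invertible; the quickest clean fix is to exploit the left-right symmetry of the equation $AXA = XAX$ to swap the roles of $A$ and $X$ before citing it. No new computation beyond what the preliminaries already provide is needed.
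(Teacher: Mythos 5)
Parts (1) and (2) are correct and coincide with the paper's own argument: part (1) is exactly Lemma \ref{both_invertible}, and part (2) is exactly the contrapositive use of Lemma \ref{second}. The problem is in part (3), and specifically in the route you single out at the end as ``the quickest clean fix.'' Lemma \ref{both_invertible} assumes that \emph{both} the coefficient matrix and the solution are invertible --- its proof rewrites $AXA=XAX$ as $X^{-1}AX=AXA^{-1}$, which uses both inverses --- so swapping the roles of $A$ and $X$ buys you nothing: after the swap the ``solution'' is the nilpotent matrix $A$, the invertibility hypothesis on the solution still fails, and the lemma cannot be cited. Likewise, the side remark that $A=X^{-1}AXAX^{-1}$ exhibits $A$ as similar to $AXAX^{-1}$ is not correct (the map $M\mapsto X^{-1}MX^{-1}$ is not a conjugation), although you do not actually use it.

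Your alternative route for part (3) is sound and rescues the proof: by Lemma \ref{powerlemma}, $A^nXA=XAX^n$ for all $n$; taking $n$ to be the nilpotency index of $A=B$ gives $XAX^n=0$, and since $X$ is invertible this forces $A=0$, contradicting that $A$ is a nonzero Jordan block (the same implicit size assumption the paper makes). This is a genuinely different argument from the paper's: there one observes that $\sigma(A)\cap\sigma(X)=\varnothing$ (because $\sigma(A)=\{0\}$ while $X$ is invertible) and invokes Theorem \ref{zero-solution}, which in turn rests on the identity $XA\phi_A(X)=0$. Your power-identity computation is more elementary and self-contained, whereas the paper's spectral argument is shorter given the machinery already in place. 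The proposal is acceptable once you delete the role-reversal route and its ``fix'' and retain only the direct computation via Lemma \ref{powerlemma}.
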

\begin{proof}
    The first case follows from Lemma \ref{both_invertible}. Now, by Lemma \ref{second}, if $A$ is invertible and $X \neq 0$, the $\sigma(X) = \{\lambda\}$, i.e $X$ is invertible. Thus, if $X$ is singular, $X$ must be $0$ matrix. Let, $A = B$, i.e., $\lambda = 0$, and assume if possible, $X$ is invertible. Then, $\sigma(A) \cap \sigma(X) = \varnothing$. Thus, by Theorem \ref{zero-solution}, $A$ must be $0$, which is not true. Hence, $X$ must be singular in this case.    
\end{proof}
\begin{corollary}
Let the coefficient matrix $A= \begin{pmatrix}
A_1&0&\dots&0\\
0&A_2&\dots&0\\
.\\
.\\
.\\
0&0&\dots&A_m
\end{pmatrix}$, where $A_i$'s are square matrices, then $X= \begin{pmatrix}
X_1&0&\dots&0\\
0&X_2&\dots&0\\
.\\
.\\
.\\
0&0&\dots&X_m
\end{pmatrix} \in Sol_A$, where $X_i \in Sol_{A_i}$. 
\end{corollary}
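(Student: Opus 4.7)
The plan is to verify the Yang--Baxter equation $AXA = XAX$ by a direct block-by-block computation. The key observation is the standard fact that the product of two block diagonal matrices whose block sizes are conformable is again block diagonal, with each diagonal block equal to the product of the corresponding blocks of the factors. Since each $X_i$ must have the same size as $A_i$ for the statement $X_i \in Sol_{A_i}$ to make sense, the blocks of $A$ and of $X$ line up, so this rule applies.

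First I would compute $AX$, obtaining a block diagonal matrix whose $i$-th diagonal block is $A_iX_i$. Applying the rule again gives $AXA$ as block diagonal with $i$-th block $A_iX_iA_i$, and similarly $XAX$ as block diagonal with $i$-th block $X_iA_iX_i$. Then I would invoke the hypothesis that $X_i \in Sol_{A_i}$ for every $i$, which means $A_iX_iA_i = X_iA_iX_i$ for each block. Since two block diagonal matrices are equal iff they agree block by block, this immediately yields $AXA = XAX$, i.e.\ $X \in Sol_A$.

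There is no genuine obstacle here; the whole proof amounts to the observation that block diagonal structure is preserved under multiplication and that the YBE decouples across independent blocks. The only point worth being slightly careful about is confirming, at the outset, that the block partition of $X$ matches that of $A$ so that the products $A_iX_i$ are actually defined; this is built into the convention that $X_i$ and $A_i$ are square matrices of the same size.
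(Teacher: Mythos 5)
Your proof is correct, and it is exactly the direct block-by-block verification the paper relies on (the corollary is stated there without proof, as an immediate consequence of block diagonal multiplication). Nothing is missing.
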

\subsection{Some special solutions }
\begin{Example}\label{ex1}
When $A=\lambda I + B$, for some $\lambda \neq 0 $, is a $2\times 2$ Jordan block, non-trivial solution $X$ has one of the following form: \\
$\begin{pmatrix}
\lambda&1\\
0&\lambda
\end{pmatrix}, 
\begin{pmatrix}
\lambda+\lambda\sqrt{a}& a\\
-\lambda^2 & \lambda-\lambda \sqrt{a}
\end{pmatrix} or 
\begin{pmatrix}
\lambda-\lambda\sqrt{a}& a\\
-\lambda^2 & \lambda+\lambda \sqrt{a}
\end{pmatrix}$ where a $\in K$\\
\end{Example}
\begin{proof}
Let $X= \begin{pmatrix}
x_{1}&x_{2}\\
x_{3}&x_{4}
\end{pmatrix} $, then for $A= \lambda I_{2}+B$ \\
\begin{equation}
    \begin{aligned}
AXA &= \begin{pmatrix}
\lambda&1\\
0&\lambda
\end{pmatrix} 
\begin{pmatrix}
x_{1}&x_{2}\\
x_{3}&x_{4}
\end{pmatrix}
\begin{pmatrix}
\lambda&1\\
0&\lambda
\end{pmatrix} \\
 &= \begin{pmatrix}
 \lambda^2 x_{1}+ \lambda x_{3}& \lambda x_{1}+x_{3}+\lambda^2 x_{2}+ \lambda x_{4}\\
 \lambda^2 x_{3}& \lambda x_{3}+\lambda^2 x_{4} 
\end{pmatrix} \\
 XAX &= \begin{pmatrix}
x_{1}&x_{2}\\
x_{3}&x_{4}
\end{pmatrix}
 \begin{pmatrix}
\lambda&1\\
0&\lambda
\end{pmatrix}
\begin{pmatrix}
x_{1}&x_{2}\\
x_{3}&x_{4}
\end{pmatrix} \\
 &= \begin{pmatrix}
\lambda x_{1}^2 + x_{1}x_{3}+ \lambda x_{3}x_{2} &\lambda x_{1}x_{2} + x_{1}x_{4}+ \lambda x_{4}x_{2}\\
\lambda x_{1}x_{3} +x_{3}^2+ \lambda x_{3}x_{4}& \lambda x_{2}x_{3} + x_{4}x_{3}+ \lambda x_{4}^2
\end{pmatrix} 
\end{aligned}
\end{equation}
Upon equating and solving, we have, $x_{3}= 0$ or $-\lambda^2$. Now if $x_{3}=0$, as a consequence from above equations, we get $x_{1} = x_{4}= 0$ or $\lambda$. But, $x_{1},x_{4}= 0$ is not possible, since by Theorem \ref{main_thm_2}, when $\lambda \neq 0$, non-trivial solution $X$ is similar to A, hence trace of $A$ and $X$ must be equal, which is $2\lambda$. As a result, we have $x_{1},x_{4}=\lambda$, also which leads to $x_{2}=1.$ \\
Now, if $x_{3}= -\lambda^2$, then we have $x_{1},x_{4}= \lambda\pm\lambda \sqrt{x_2}$. But again, since $A$ and $X$ are similar, their trace must be equal. As a result,  $x_{1}$ should be conjugate of $x_{4}$, and $x_{2}$ has a free choice.

Note that, when the dimension of the coefficient matrix increases, there will be more free variable as $a$ in this case, which hardly follows any pattern w.r.t the dimension.
 \end{proof}
 
\begin{Example}\label{ex2} 
Let A be the Jordan block of size 2, with eigenvalue $\lambda = 0$, we have the following.
\begin{equation}
    \begin{aligned}
    AXA &= \begin{pmatrix}
0&1\\
0&0
\end{pmatrix}
\begin{pmatrix}
x_{1}&x_{2}\\
x_{3}&x_{4}
\end{pmatrix}
\begin{pmatrix}
0&1\\
0&0
\end{pmatrix} 
= \begin{pmatrix}
0&x_{3}\\
0&0
\end{pmatrix} \\
 XAX &= \begin{pmatrix}
x_{1}&x_{2}\\
x_{3}&x_{4}
\end{pmatrix}
\begin{pmatrix}
0&1\\
0&0
\end{pmatrix}
\begin{pmatrix}
x_{1}&x_{2}\\
x_{3}&x_{4}
\end{pmatrix}
= \begin{pmatrix}
x_{1}x_{3}&x_{1}x_{4}\\
x_{3}^2&x_{4}x_{3}
\end{pmatrix} \\
 \end{aligned}
\end{equation}
This gives $x_{3}=0, x_{1}x_{4}=0$ and $x_{2}$ has free choice over $K$. \\
Hence, $Sol_A = \{\begin{pmatrix}
a&\alpha\\
0&b
\end{pmatrix} |\  a,b,\alpha \in K,\ ab=0 \}$.
\end{Example}
\subsubsection{Some General Solutions when $\lambda = 0$}
\begin{Example}\label{ex3}
Let A be the $3\times 3$ Jordan block with eigenvalue 0, and the variable matrix be $X= \begin{pmatrix}
a & b & c\\ 
d & e & f\\
g & h & i
\end{pmatrix}$.
 Upon equating $AXA=XAX$, we have the following polynomials, whose zeros give $Sol_A$. \\
\begin{equation}
\begin{aligned}
 f_1 = ad+bg     &&  f_2 = ae+bh-d && f_3 = af+bi-e  \\
 f_4 = d^2+eg    && f_5 = de+eh-g && f_6 = df+ei-h\\
 f_7 = gd+hg && f_8 = ge+h^2 && f_9 =  gf+hi
\end{aligned}
\end{equation}
Let us find a Gr$\ddot{o}$bner basis for the ideal $\langle f_1,f_2,\dots,f_9 \rangle$ in the ring $\mathbb{C}[a,b,c,d,e,f,g,h,i]$, with respect to the lexicographic order for $a>b>\dots >i$. On simplifying from the Gr$\ddot{o}$bner basis, we get $g,h,d,e=0$ and $af+bi=0$, and other variables have free choice. Hence,
\[ Sol_A = \{ \begin{pmatrix}
a & b & c\\
0 & 0 & f\\
0 & 0 & i
\end{pmatrix} |\   af+bi = 0 \}.
\]
\end{Example}

\begin{Example}\label{examplenilpotent}
For the coefficient matrix A being Jordan block of size n with eigenvalue 0, one can generalise the solutions in Example \ref{ex3}, to the following solutions class. But note that, for $n >3$, this does not give the whole set of solutions.  \[ X= \begin{pmatrix}
0&a_{1}&a_{2}&....&a_{n-2}&\alpha\\
0&0&0&\dots & \sum_{i=1}^{n-3} a_{i} b_{i+1} &b_{1}\\
0&0&0&\dots&0&b_{2}\\
.&.&.&\dots&.&.\\
.&.&.&\dots&.&.\\
.&.&.&\dots&.&.\\
0&0&0&\dots&0&b_{n-2}\\
0&0&0&\dots&0&0
\end{pmatrix} \]

We can directly verify the above matrix belongs $Sol_A$.
\end{Example}
\begin{lemma}
    Let $A \in M_n(K)$ be in its Jordan-canonical form, then for a matrix, $M \in M_n(K)$, $AM = MA$ implies $M \in K[A]$.
\end{lemma}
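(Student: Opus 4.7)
The plan is to reduce first to a single Jordan block and then glue the pieces together via Chinese--Remainder--style interpolation. Note that the statement as written is only literally correct when each eigenvalue of $A$ corresponds to a single Jordan block (otherwise a diagonal $A=\lambda I$ would admit any $M$ as a commutant while $K[A]=K\cdot I$); I will proceed under this non--derogatory assumption, which is the case used later in the paper.

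First I would decompose the ambient space into generalized eigenspaces. Writing $A=\bigoplus_i J_{i}$ with $J_i=\lambda_i I+B$ the Jordan block of size $m_i$ for the distinct eigenvalues $\lambda_1,\dots,\lambda_r$, let $V_i=\ker\bigl((A-\lambda_i I)^{m_i}\bigr)$. Since each $(A-\lambda_i I)^{m_i}$ is a polynomial in $A$, it commutes with $M$, so $M$ preserves each $V_i$. Relative to the Jordan basis, $M$ therefore appears as a block diagonal matrix $M=\bigoplus_i M_i$ with $M_iJ_i=J_iM_i$ on $V_i$. This reduces the problem to the single Jordan block case.

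Next I would handle one Jordan block $J=\lambda I+B$ of size $m$, where commutation with $J$ is equivalent to commutation with the nilpotent shift $B$. Taking the standard basis $e_1,\dots,e_m$ with $Be_k=e_{k-1}$ ($Be_1=0$), I would expand $Me_m=\sum_{k=1}^{m} c_k e_k$ and then use the relation $Me_{m-j}=MB^je_m=B^jMe_m$ to express every column of $M$ in terms of the constants $c_1,\dots,c_m$. A direct comparison shows
\begin{equation*}
M=\sum_{j=0}^{m-1} c_{m-j}\,B^{j},
\end{equation*}
so $M\in K[B]=K[J]$.

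Finally I would glue the block--wise polynomials into a single polynomial in $A$. From the previous step, for each $i$ there is a polynomial $p_i\in K[x]$ of degree $<m_i$ with $M_i=p_i(J_i)$. Since the $\lambda_i$ are distinct, the ideals $(x-\lambda_i)^{m_i}$ are pairwise coprime, so by the Chinese Remainder Theorem there exists $p\in K[x]$ with $p\equiv p_i \pmod{(x-\lambda_i)^{m_i}}$ for every $i$. Evaluating on each block gives $p(J_i)=p_i(J_i)=M_i$, hence $p(A)=M$, proving $M\in K[A]$. The main technical obstacle is the interpolation step: this is where the non--derogatory hypothesis is essential, because coprimality of the local factors $(x-\lambda_i)^{m_i}$ (equivalently, $\min_A=\phi_A$ having degree $n$) is exactly what allows the block data to come from a single polynomial in $A$.
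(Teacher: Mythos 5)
Your proof is correct, and the caveat you lead with is genuinely needed: as stated the lemma is false for derogatory $A$ (already $A=\lambda I$ with $n\ge 2$ commutes with everything while $K[A]=K\cdot I$), so the assumption that each eigenvalue has a single Jordan block is the right repair. There is nothing in the paper to compare your argument against: the proof environment for this lemma is left empty, so your write-up supplies the missing proof rather than diverging from an existing one. Each of your three steps is sound: $M$ commutes with the polynomials $(A-\lambda_i I)^{m_i}$ and hence preserves the generalized eigenspaces, giving the block-diagonal reduction; the cyclic-vector computation ($e_{m-j}=B^je_m$, so $M$ is determined by $Me_m$ and equals $\sum_j c_{m-j}B^j$) correctly identifies the commutant of a single Jordan block with $K[J]$; and the Chinese Remainder interpolation glues the local polynomials $p_i$ into a single $p$ with $p(A)=M$, which is exactly where the non-derogatory hypothesis (pairwise coprime factors $(x-\lambda_i)^{m_i}$) enters. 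It is worth noting that in the places where the paper invokes this lemma (the two-Jordan-block lemma and the commuting-solution theorem for a nilpotent block), $A$ is a single Jordan block, so only your middle step is actually used there; the reduction and interpolation steps are what extend the statement to the general non-derogatory case, and your remark makes explicit the hypothesis the paper's statement silently requires.
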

\begin{proof}
    
\end{proof}

\begin{theorem}
Let A be the Jordan block of size $n+1$, $n\geq 3$, with eigenvalue 0. Any commuting solution to the YBE, with the coefficient matrix $A$, will have the form,\\

\[ X= \begin{pmatrix}
0&1&0&0&\dots&\alpha&\beta\\
0&0&1&0&\dots&0&\alpha\\
0&0&0&1&\dots&0&0\\
.&.&.&.&\dots&.&.\\
.&.&.&.&\dots&.&.\\
.&.&.&.&\dots&.&.\\
0&0&0&0&\dots&0&1\\
0&0&0&0&\dots&0&0
\end{pmatrix} \text{ or }
\begin{pmatrix}
    0&0&\dots&\alpha&\beta\\
    0&0&\dots&0&\alpha\\
    0&0&\dots&0&0\\
    .&.&\dots&.&.\\
    .&.&\dots&.&.\\
     .&.&\dots&.&.\\
     0&0&\dots&0&0\\
     0&0&\dots&0&0
\end{pmatrix}
\text{ where } \alpha, \beta \in K \]
\end{theorem}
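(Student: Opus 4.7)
The plan is to apply the preceding lemma on commutants of Jordan blocks: since $A$ is a single Jordan block of size $n+1$ with eigenvalue $0$, any matrix commuting with $A$ lies in $K[A]$, and because $A^{n+1}=0$, every commuting solution can be written uniquely as $X = \sum_{i=0}^{n} c_i A^i$ for scalars $c_0, \ldots, c_n \in K$. The problem then reduces to identifying which tuples $(c_0, \ldots, c_n)$ satisfy the YBE.

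Next, I would use $AX = XA$ to rewrite the YBE as $A^2 X = A X^2$, i.e., $A(AX - X^2) = 0$. Since $AX - X^2$ lies in $K[A]$ and the kernel of left multiplication by $A$ within $K[A]$ is precisely $K \cdot A^n$, the YBE becomes the single requirement $AX - X^2 \in K \cdot A^n$. Expanding $AX$ and $X^2$ in the basis $\{I, A, \ldots, A^n\}$, the coefficient of $A^k$ in $AX - X^2$ equals $c_{k-1} - \sum_{i+j=k} c_i c_j$ (with the convention $c_{-1} := 0$). Setting this to zero for $k = 0, 1, \ldots, n-1$ gives the governing system of polynomial equations in the $c_i$; no constraints arise on the highest coefficients $c_{n-1}, c_n$.

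The equations for $k = 0, 1$ immediately force $c_0 = 0$. With $c_0 = 0$, the $k = 2$ equation reduces to $c_1(1 - c_1) = 0$, giving $c_1 \in \{0, 1\}$. The key step is an induction on $k$ showing that $c_{k-1} = 0$ for $3 \le k \le n - 1$: under the induction hypothesis $c_2 = \cdots = c_{k-2} = 0$, the convolution $\sum_{i+j=k} c_i c_j$ should collapse to $2 c_1 c_{k-1}$, because every other pair $(i, j)$ with $i + j = k$ forces one index to lie in $\{0, 2, 3, \ldots, k-2\}$, where the corresponding $c_i$ vanishes. The equation then reads $c_{k-1}(1 - 2 c_1) = 0$, and since $1 - 2 c_1 \in \{+1, -1\}$ for both admissible values of $c_1$, I would conclude $c_{k-1} = 0$. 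This convolution collapse is the main obstacle: it is a clean inductive argument but requires careful bookkeeping of which index pairs survive the vanishing hypothesis.

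Finally, assembling the constraints gives $X = c_1 A + c_{n-1} A^{n-1} + c_n A^n$ with $c_1 \in \{0, 1\}$ and $c_{n-1}, c_n \in K$ free. Setting $\alpha := c_{n-1}$ and $\beta := c_n$ and writing out the Toeplitz patterns of $A$, $A^{n-1}$, and $A^n$ within an $(n+1) \times (n+1)$ block recovers exactly the two displayed matrix forms: $X = A + \alpha A^{n-1} + \beta A^n$ when $c_1 = 1$ and $X = \alpha A^{n-1} + \beta A^n$ when $c_1 = 0$. The hypothesis $n \geq 3$ ensures the induction step is either nonempty (for $n \geq 4$) or degenerately empty (for $n = 3$, in which case no constraints beyond $c_0 = 0$ and $c_1 \in \{0,1\}$ arise, so that $c_2, c_3$ are both free, matching the $(n-1, n) = (2, 3)$ positions of $\alpha$ and $\beta$).
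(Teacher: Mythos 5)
Your proposal is correct and takes essentially the same route as the paper: write the commuting solution as a polynomial $\sum_i c_i A^i$ in the nilpotent Jordan block, compare coefficients of powers of $A$ in the YBE, and conclude $c_0=0$, $c_1\in\{0,1\}$, $c_2=\cdots=c_{n-2}=0$ with $c_{n-1},c_n$ free. Your reformulation via $A(AX-X^2)=0$ with kernel $K\cdot A^n$, and the explicit induction on the collapsing convolution, are just a tidier presentation of the paper's direct comparison of $BXB$ with $XBX$.
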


\begin{proof}
Here we have $A= B$, as $\lambda =0$, hence if $X$ is a commuting solution to the YB equation, then $X$ must be a polynomial in $B$. Let, $X = a_{0}I+a_{1}B+...+a_{n-1}B^{n-1}+a_{n}B^n$  (note  $B^{n+1}=0$ ). 
Then,  
\begin{equation}
    \begin{aligned}
 BXB &= B(a_{0}I+a_{1}B+\dots+a_{n-1}B^{n-1}+a_{n}B^n) B \\
 &= a_{0}B^2+a_{1}B^3+\dots+a_{n-2}B^{n} \\
  &= \sum_{i= 2}^n a_{i-2}B^i \\
XBX &= (a_{0}I+a_{1}B+\dots+a_{n}B^n) B (a_{0}I+a_{1}B+\dots+a_{n}B^n) \\
 &= (a_{n-1}a_{0}+a_{n-2}a_{1}+\dots+a_{0}a_{n-1})B^n +\dots+(a_{1}a_{0}+a_{0}a_{1})B^2 + a_{0}^2B \\
 &= \sum_{i=1}^n \sum_{j+k = i-1}a_ja_k B^i
\end{aligned}
\end{equation}
Upon equating, we get 
\begin{equation}
    \begin{aligned}
 a_{0}&=0,\  2a_0a_1 = a_0 \\
a_{1}&= 2a_{2}a_{0}+a_{1}^2,(\implies a_{1}= 0 \text{ or } 1)\\
a_{2} &= a_{3}a_{0}+a_{2}a_{1}+a_{1}a_{2}+a_{0}a_{3}, (\implies a_{2}= 0)\\
a_{3} &= a_{4}a_{0}+a_{3}a_{1}+a_{2}^2+a_{1}a_{3}+a_{0}a_{4}, (\implies a_{3}=0)
 \end{aligned}
\end{equation}
Continuing this way, we obtain, in general,
\[ a_{n-2}=a_{n-1}a_{0}+a_{n-2}a_{1}+...+a_{1}a_{n-2}+a_{0}a_{n-1}, (\implies a_{n-2}=0)\]
and $a_{n-1}, a_{n}$ has free choice over $K$. Hence $X$ has a general form as stated above when $a_{1}=1$, and $a_{1}=0$ respectively.
\end{proof}

\section{Two Jordan blocks}
Let the coefficient matrix $\mathcal{A}$ has two Jordan blocks in its Jordan-canonical form, say $\begin{pmatrix}
A_1 & 0\\
0 & A_2
\end{pmatrix}$ where $A_i = \lambda_i I_{n_i} + B_i$. Rewriting the YB equation in block form with $X= \begin{pmatrix}
X_1 & X_2 \\
X_3 & X_4
\end{pmatrix}$, and expanding, gives us the set of four equations, namely:
\begin{equation}\label{two_jordan_block}
\begin{aligned}
    &X_1A_1X_1+X_2A_2X_3=A_1X_1A_1\\
    &X_1A_1X_2+X_2A_2X_4=A_1X_2A_2\\
    &X_3A_1X_1+X_4A_2X_3=A_2X_3A_1\\
    &X_3A_1X_2+X_4A_2X_4=A_2X_4A_2\\
\end{aligned}
\end{equation} 
 Also, note that when the coefficient matrix is invertible, any invertible solution is similar to the coefficient matrix, as we stated in  Lemma $\ref{both_invertible}$.

\begin{lemma}
If the coefficient matrix $\mathcal{A}$ has two Jordan blocks, $A_i$ with $\lambda_i \neq 0$, for $i\ = 1,2$, with the cyclic subspaces of $A_i$ being $P_i$ respectively. Then, for $X \in Sol_{\mathcal{A}}$,  $Ker(X) = P_1$ or $P_2$ or $P_1\bigoplus P_2$. 

\end{lemma}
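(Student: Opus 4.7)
The plan is to show that $Ker(X)$ is $\mathcal{A}$-invariant, use the primary decomposition to reduce to a chain of possibilities in each cyclic block, and then rule out the intermediate cases via the YBE. Since $\lambda_1, \lambda_2 \neq 0$ makes $\mathcal{A}$ invertible, Lemma \ref{preserver} gives $\mathcal{A}(Ker(X)) = Ker(X)$. Working in the principal case $\lambda_1 \neq \lambda_2$, the primary decomposition canonically splits $K^n = P_1 \oplus P_2$, so any $\mathcal{A}$-invariant subspace $W$ decomposes as $W = (W \cap P_1) \oplus (W \cap P_2) = U_1 \oplus U_2$ with each $U_i \subseteq P_i$ being $A_i$-invariant. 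Because $P_i$ is cyclic for $A_i$, its $A_i$-invariant subspaces form the chain $0 \subset \langle v_1^{(i)}\rangle \subset \langle v_1^{(i)}, v_2^{(i)}\rangle \subset \cdots \subset P_i$, where $v_1^{(i)}, \ldots, v_{n_i}^{(i)}$ is the Jordan basis of $A_i$. It thus suffices to rule out the intermediate cases $0 \subsetneq U_i \subsetneq P_i$.

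Assume for contradiction $0 \subsetneq U_1 \subsetneq P_1$, so $v_1^{(1)}, \ldots, v_k^{(1)} \in Ker(X)$ while $v_{k+1}^{(1)} \notin Ker(X)$ for some $1 \leq k < n_1$. From $\mathcal{A} v_{k+1}^{(1)} = v_k^{(1)} + \lambda_1 v_{k+1}^{(1)}$ one gets $X\mathcal{A} v_{k+1}^{(1)} = \lambda_1 X v_{k+1}^{(1)}$, and the YBE yields $\mathcal{A} X \mathcal{A} v_{k+1}^{(1)} = X \mathcal{A} X v_{k+1}^{(1)}$. Writing $y = \mathcal{A} X v_{k+1}^{(1)}$, which is nonzero because $\mathcal{A}$ is invertible and $X v_{k+1}^{(1)} \neq 0$, this rearranges to $Xy = \lambda_1 y$, so $\lambda_1 \in \sigma(X)$. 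This step mirrors the induction in the proof of Lemma \ref{main_lemma_three}.

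Next, pick any nonzero $\lambda_1$-eigenvector $w$ of $X$. Applying YBE to $Xw = \lambda_1 w$ gives $\mathcal{A} X\mathcal{A} w = \lambda_1 X\mathcal{A} w$, so $X\mathcal{A} w \in E_{\lambda_1}$. Since $\lambda_1 \neq \lambda_2$, the eigenspace $E_{\lambda_1} = \langle v_1^{(1)}\rangle$ is one-dimensional and contained in $U_1 \subseteq Ker(X)$, so $X\mathcal{A} w \in Ker(X)$. The $\mathcal{A}$-invariance of $Ker(X)$ then gives $\mathcal{A}(X\mathcal{A} w) \in Ker(X)$, whence $X\mathcal{A} X\mathcal{A} w = 0$; a further YBE rewrite turns this into $\mathcal{A} X \mathcal{A}^2 w = 0$, and invertibility of $\mathcal{A}$ yields $\mathcal{A}^2 w \in Ker(X)$. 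But $\mathcal{A}^2$ acts bijectively on both $K^n$ and on $Ker(X)$, forcing $w \in Ker(X)$, which contradicts $Xw = \lambda_1 w \neq 0$. A symmetric argument rules out $0 \subsetneq U_2 \subsetneq P_2$.

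These combine to force $U_i \in \{0, P_i\}$, so $Ker(X) \in \{0, P_1, P_2, P_1 \oplus P_2\}$; the invertible case $Ker(X) = 0$ is already governed by Lemma \ref{both_invertible}, leaving the three alternatives in the statement. The hard part will be the degenerate case $\lambda_1 = \lambda_2$, where $E_{\lambda_1}$ becomes two-dimensional and the key step $X\mathcal{A} w \in Ker(X)$ no longer follows from $v_1^{(1)} \in Ker(X)$ alone (since $X\mathcal{A} w$ may also involve $v_1^{(2)} \notin Ker(X)$), and the clean primary decomposition of $\mathcal{A}$-invariant subspaces is also lost. That case seems to demand either restricting the hypothesis or a separate, delicate block-by-block analysis via the four equations in \ref{two_jordan_block}.
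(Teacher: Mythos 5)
Your argument in the case $\lambda_1\neq\lambda_2$ is correct, and it is in fact tighter than the paper's own proof, which follows a shorter but looser route. The paper also starts from the $\mathcal{A}$-invariance of $Ker(X)$ (Lemma \ref{preserver}), but then simply asserts that the only candidates are $P_1$, $P_2$, $P_1\oplus P_2$, $E_{\lambda_1}$, $E_{\lambda_2}$ and removes the last two by Corollary \ref{fact_one}; it never addresses the other invariant subspaces that genuinely exist, such as the intermediate pieces $\langle v_1^{(1)},\dots,v_k^{(1)}\rangle$ with $1\le k<n_1$ or sums like $E_{\lambda_1}\oplus P_2$. Your two-step elimination --- first forcing $\lambda_1\in\sigma(X)$ exactly as in the induction of Lemma \ref{main_lemma_three}, then rerunning the Lemma \ref{main_lemma} argument under the weaker hypotheses $E_{\lambda_1}\subseteq Ker(X)$ and $\mathcal{A}(Ker(X))=Ker(X)$ --- closes precisely this hole, so your proof subsumes the paper's use of Corollary \ref{fact_one} and repairs its omission. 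One shared caveat: as stated the lemma should either assume $X$ singular or admit $Ker(X)=0$ among the alternatives, since $X=\mathcal{A}$ is an invertible solution; you at least note this via Lemma \ref{both_invertible}.

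Your hesitation about the degenerate case $\lambda_1=\lambda_2$ is justified, and in fact no argument can close it, because the statement is false there as written. Take $\mathcal{A}=\begin{pmatrix}A&0\\0&A\end{pmatrix}$ with $A=\lambda I+B$, $\lambda\neq 0$, both blocks of the same size. Then $X_0=\begin{pmatrix}0&0\\0&A\end{pmatrix}\in Sol_{\mathcal{A}}$ with $Ker(X_0)=P_1$, and $g=\begin{pmatrix}I&0\\I&I\end{pmatrix}$ commutes with $\mathcal{A}$, so $X'=gX_0g^{-1}=\begin{pmatrix}0&0\\-A&A\end{pmatrix}\in Sol_{\mathcal{A}}$ while $Ker(X')=g(P_1)$ is the diagonal copy $\{(v,v)\}$ of $P_1$, which is none of $P_1$, $P_2$, $P_1\oplus P_2$. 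So the lemma requires $\lambda_1\neq\lambda_2$ (the case you prove), or must be restated up to a choice of $\mathcal{A}$-invariant decomposition; the paper's one-line proof overlooks this because the diagonal-type invariant subspaces are absent from its list. In short, your proposal is a correct and strictly more careful proof of the true part of the statement, and the part you left open is not a gap in your argument but a defect of the statement itself.
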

\begin{proof}
 By Lemma \ref{preserver}, if $X \in Sol_{\mathcal{A}}$, $Ker(X)$ is an invariant subspace of $\mathcal{A}$. Hence, $Ker(X)$ should be one of $P_1,P_2, P_1 \bigoplus P_2, E_{\lambda_1} , E_{\lambda_2}$. Now, in light of the Corollary \ref{fact_one}, $E_{\lambda_1}, E_{\lambda_2}$ can not be the kernel, which leads to the result.

\end{proof}
\begin{lemma}\label{lemma_twoblock}
Let $A_1$ and $A_2$ be two Jordan blocks with eigenvalues $\lambda_1$ and $\lambda_2$, respectively, and $X_2 \in Sol_{A_2}$. Then, $A_1X_1A_2= X_1A_2X_2$ if and only if either $\sigma(A_1) \cap \sigma(X_2) \neq \phi $ or $X_1A_2 = 0$. Further, if $A_1, A_2$ are Jordan blocks with non-zero eigenvalues and of the same dimension, then either $A_1 = A_2$ or $X_1 = 0$. Also, when $A_1=A_2=A$, $X_1$ will have the form $ZS^{-1}A^{-1}$, where $Z \in K[A]$, and for some $S \in GL_n(K)$. 
\end{lemma}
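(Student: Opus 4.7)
The plan is to reduce each of the three claims to Sylvester's theorem and to the already-established structure theory for a single Jordan block. The key idea throughout is to substitute $Y := X_1 A_2$, which turns the equation $A_1 X_1 A_2 = X_1 A_2 X_2$ into a Sylvester-type equation $A_1 Y - Y X_2 = 0$.

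\textbf{First claim.} I apply Theorem \ref{sylvester} with $A = A_1$, $B = -X_2$, and $C = 0$. The theorem guarantees that the unique solution of $A_1 Y + Y(-X_2) = 0$ is $Y = 0$ precisely when $A_1$ and $-(-X_2) = X_2$ share no eigenvalue, i.e.\ $\sigma(A_1) \cap \sigma(X_2) = \varnothing$. So if the spectra are disjoint then $X_1 A_2 = 0$; conversely, if $X_1 A_2 = 0$ then both sides of the original equation vanish. This is exactly the stated dichotomy.

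\textbf{Second claim.} Assume $A_1, A_2$ are Jordan blocks of the same size with $\lambda_1, \lambda_2 \neq 0$. Since $X_2 \in Sol_{A_2}$ and $A_2$ is an invertible Jordan block, Lemma \ref{second} applies: either $X_2 = 0$, in which case the equation reads $A_1 X_1 A_2 = 0$ and invertibility of $A_1, A_2$ forces $X_1 = 0$; or $X_2 \neq 0$ and $\sigma(X_2) = \{\lambda_2\}$. In the latter case, $\sigma(A_1) \cap \sigma(X_2) = \{\lambda_1\} \cap \{\lambda_2\}$, so by the first part, either $\lambda_1 = \lambda_2$ (forcing $A_1 = A_2$ since they are Jordan blocks of equal size with equal eigenvalue) or $X_1 A_2 = 0$, which again gives $X_1 = 0$ by invertibility of $A_2$.

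\textbf{Third claim.} Suppose $A_1 = A_2 = A$ with $A$ a Jordan block of nonzero eigenvalue. If $X_2 = 0$ then $X_1 = 0$ and we may take $Z = 0$. Otherwise, by Lemma \ref{both_invertible}, $X_2 \simeq A$, so $X_2 = S A S^{-1}$ for some $S \in GL_n(K)$. Substituting into $A X_1 A = X_1 A X_2$ and setting $Y = X_1 A$, I get $A Y = Y S A S^{-1}$, i.e.\ $A(YS) = (YS)A$. Since $A$ is a single Jordan block, its centralizer in $M_n(K)$ is exactly $K[A]$ (the lemma invoked just before this theorem in the paper), so $YS = Z$ for some $Z \in K[A]$. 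Hence $X_1 = Y A^{-1} = Z S^{-1} A^{-1}$, which is the claimed form. The main subtlety in the whole argument is verifying that the ``if and only if'' of the first claim is read correctly: the forward direction is the genuine content via Sylvester, while the backward direction is only the trivial implication $X_1 A_2 = 0 \Rightarrow$ equation holds; the spectrum-overlap hypothesis on its own is not sufficient, but the statement is intended as a structural alternative between the spectrum condition and the triviality $X_1 A_2 = 0$, which is how it is used in the subsequent parts.
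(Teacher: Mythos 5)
Your proposal is correct and follows essentially the same route as the paper's own proof: the substitution $Y = X_1A_2$ combined with Sylvester's theorem (Theorem \ref{sylvester}) for the first claim, the fact that a nonzero $X_2 \in Sol_{A_2}$ satisfies $X_2 \simeq A_2$ so $\sigma(X_2)=\{\lambda_2\}$ for the second, and the centralizer argument $A(YS)=(YS)A \Rightarrow YS = Z \in K[A]$ giving $X_1 = ZS^{-1}A^{-1}$ for the third. Your closing caveat about the ``if and only if'' is apt, since the paper's proof likewise only establishes the structural alternative (equation plus $X_1A_2 \neq 0$ forces spectral overlap), not a genuine converse from the spectral condition alone.
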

\begin{proof}
Let $Y= X_1A_2$, then the equation $A_1X_1A_2= X_1A_2X_2$ becomes, $A_1Y=YX_2$. Then by Sylvester's theorem \ref{sylvester}, Y has a non-zero solution if and only if $\sigma(A_1) \cap \sigma(X_2) \neq \varnothing $.
Therefore, if $X_1A_2 \neq 0$, then $\sigma(A_1) \cap \sigma(X_2) \neq \varnothing $, or if $X_1A_2 = 0$, then $\sigma(A_1) \cap \sigma(X_2) = \varnothing $.\\
Let $X_1A_2 \neq 0$, and $A_1, A_2$ are Jordan blocks of the same dimension, with non-zero eigenvalues. As $X_2 \in Sol_{A_2}$, if $X_2$ is non-trivial, then $X_2 \simeq A_2$, i.e, $\sigma(X_2) = \{\lambda_2\} = \sigma(A_2)$. Also, we have $\sigma(A_1) \cap \sigma(X_2) \neq \varnothing$, i.e. $\sigma(X_2) = \{\lambda_1\} = \sigma(A_1)$, leads to $\lambda_1 = \lambda_2$. Since $A_1$ and $A_2$ are Jordan blocks and of the same dimension, we have, $A_1 = A_2$.\\
 If $X_1A_2 = 0$, as $A_2$ is invertible, we have $X_1 = 0$.\\
Now, when $A_1=A_2=A$, the equation $A_1X_1A_2= X_1A_2X_2$ implies $AX_1A= X_1AX_2$. By taking $Y=X_1A$, and since $X_2 \simeq A$, we have $X_2=SAS^{-1}$ for some S, and $AY-YSAS^{-1}=0$
$\Longrightarrow AYS-YSA=0$. Calling $YS$ as $Z$, we have $AZ=ZA$.
Since A is Jordan block, $Z \in K[A]$.

\end{proof}

\begin{theorem}\label{two_jordan_main}
Let $\mathcal{A}$ be a matrix having only two eigenvalues $\lambda_1, \lambda_2 \neq 0$, with the same algebraic multiplicity, and geometric multiplicity 1. Further, let us assume 0 is an eigenvalue of $X$. Let $U \in GL_n(K)$ be defined as, the columns of $U$ are generalized eigenvectors of $\mathcal{A}$,  such that $ U\mathcal{A}U^{-1}$ is Jordan-canonical form  of $\mathcal{A}$. Then one of the following is true. 
\begin{enumerate}
    \item $X=0$
    \item $X = U^{-1} \begin{pmatrix}
    0 & ZS^{-1}A \\
    0 & Y_2
    \end{pmatrix}U$  or $U^{-1} \begin{pmatrix}
    0 & 0 \\
    0 & Y_2
    \end{pmatrix} U$, where $Y_2 \in Sol_{A}$, $Z \in K[A]$, $A$ is the Jordan block corresponding to eigenvalue $\lambda= \lambda_1 = \lambda_2$, and for some $S \in GL_n(K)$
    \item $X = U^{-1} \begin{pmatrix}
    Y_1 & 0 \\
    ZS^{-1}A & 0
    \end{pmatrix} U$ or $U^{-1} \begin{pmatrix}
    Y_1 & 0 \\
   0 & 0
    \end{pmatrix} U$, where $Y_1 \in Sol_{A}$, $Z \in K[A]$, $A$ is the Jordan block corresponding to eigenvalue $\lambda= \lambda_1 = \lambda_2$, and for some $S \in GL_n(K)$
\end{enumerate}
\end{theorem}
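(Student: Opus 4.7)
The plan is to pass to Jordan canonical form via $U$, reducing without loss of generality to $\mathcal{A} = \begin{pmatrix} A_1 & 0 \\ 0 & A_2 \end{pmatrix}$, where the equal-algebraic-multiplicity and geometric-multiplicity-one hypotheses force $A_1, A_2$ to be single Jordan blocks of the same size. Writing any candidate solution as $X = \begin{pmatrix} X_1 & X_2 \\ X_3 & X_4 \end{pmatrix}$, the YBE decomposes into the four block equations (\ref{two_jordan_block}), and Lemma \ref{lemma1} lets me transport the resulting description back through $U^{-1}$ at the end.

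Since $0 \in \sigma(X)$, we have $\mathrm{Ker}(X) \neq 0$, and the unlabelled kernel lemma immediately preceding Lemma \ref{lemma_twoblock} restricts $\mathrm{Ker}(X) \in \{P_1,\, P_2,\, P_1 \oplus P_2\}$. The case $\mathrm{Ker}(X) = P_1 \oplus P_2 = K^n$ gives $X = 0$ directly, producing alternative (1).

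For $\mathrm{Ker}(X) = P_1$, the first block-column of $X$ vanishes in Jordan coordinates, so $X_1 = 0$ and $X_3 = 0$. Two of the block equations collapse to $0=0$, and the remaining two read $X_4 A_2 X_4 = A_2 X_4 A_2$ and $A_1 X_2 A_2 = X_2 A_2 X_4$. The first says $X_4 =: Y_2 \in \mathrm{Sol}_{A_2}$. For the second, Lemma \ref{lemma_twoblock} applies directly: either $X_2 A_2 = 0$, which by invertibility of $A_2$ gives $X_2 = 0$ and hence the second form in alternative (2); or $\sigma(A_1) \cap \sigma(X_4) \neq \varnothing$, in which case, since $X_4 \in \mathrm{Sol}_{A_2}$ is non-zero, Lemma \ref{second} forces $\sigma(X_4) = \{\lambda_2\}$, so $\lambda_1 = \lambda_2$; combined with the equal block sizes this gives $A_1 = A_2 =: A$, and the last conclusion of Lemma \ref{lemma_twoblock} delivers $X_2$ in the form $Z S^{-1} A$ with $Z \in K[A]$ and $S \in GL_n(K)$, matching the first form in alternative (2). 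The case $\mathrm{Ker}(X) = P_2$ is handled by interchanging the roles of the two block-indices and yields alternative (3).

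The main obstacle will be the cross-block equation $A_1 X_2 A_2 = X_2 A_2 X_4$: extracting the precise form $Z S^{-1} A$ requires Sylvester's theorem together with the fact that the centralizer of a single Jordan block is the polynomial algebra it generates, which is exactly what Lemma \ref{lemma_twoblock} packages. Apart from that cross-block reduction, every other step is a direct bookkeeping consequence of results already established in the excerpt: the kernel lemma pins down the three structural cases, Lemma \ref{second} forces the spectral collapse $\lambda_1 = \lambda_2$ in the non-trivial sub-case, and Lemma \ref{lemma1} absorbs the outer $U$-conjugation.
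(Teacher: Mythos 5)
Your proposal is correct and follows essentially the same route as the paper's own proof: the kernel of $X$ is pinned to $P_1$, $P_2$, or $P_1\oplus P_2$ by the lemma preceding Lemma \ref{lemma_twoblock}, the block equations \eqref{two_jordan_block} collapse to $X_4\in Sol_{A_2}$ plus the cross equation $A_1X_2A_2=X_2A_2X_4$, and Lemma \ref{lemma_twoblock} (with Theorem \ref{main_thm_2}/Lemma \ref{second} forcing $\lambda_1=\lambda_2$) yields the stated forms, with Lemma \ref{lemma1} undoing the conjugation by $U$. The only caveat is cosmetic and inherited from the paper itself: Lemma \ref{lemma_twoblock} actually produces the off-diagonal block as $ZS^{-1}A^{-1}$, whereas the theorem statement (and your write-up) records it as $ZS^{-1}A$.
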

\begin{proof}
Let us discuss the three cases for the kernel of $X$. If the kernel of $X$ is $P_1 \bigoplus P_2$ then clearly $X=0$. If the kernel is $P_1$, let us write the block form of $X$ in the basis of $P_1 \bigoplus P_2$. We get the following block form $X \simeq \begin{pmatrix}
0 & Y_1\\
0 & Y_2
\end{pmatrix}$ for some matrices $Y_1,Y_2$. i.e., For some $U \in GL_n(K)$, we have,
\[ Y = UXU^{-1} = U \begin{pmatrix}
   X_1 & X_2\\
X_3 & X_4 
\end{pmatrix} U^{-1} = \begin{pmatrix}
   0 & Y_1\\
0 & Y_2 
\end{pmatrix}, \]
where the column vectors of $U$ are from the basis of $P_1 \bigoplus P_2$, such that $ U\mathcal{A}U^{-1} =\mathcal{A'}$ is the Jordan-canonical form of $\mathcal{A}$.  
Then, rewriting the YBE as $\mathcal{A'}Y\mathcal{A'}= Y\mathcal{A'}Y$, where $\mathcal{A'}= U\mathcal{A}U^{-1}$, gives the original solution $X$, as $X = U^{-1}YU$. 

Let $\mathcal{A'}= \begin{pmatrix}
A_1 & 0\\
0 & A_2
\end{pmatrix}$ , where $A_i = \lambda_i I+ B_i$, for $i =1,2$, is the Jordan block corresponding to eigenvalue $\lambda_i$.
Let us write the YB equation, 
\[Y_2A_2Y_2=A_2Y_2A_2\]
\[A_1Y_1A_2=A_2Y_2A_2\]

The first equation is just an instance of YBE for the single Jordan block case, which by Theorem \ref{main_thm_2} and the fact that $det(A_2) \neq 0$, we get $X_2 \simeq A_2$ or 0. By Lemma \ref{lemma_twoblock}, we have, either $X_1 = 0$ or $X_1 = ZS^{-1}A^{-1}$, where $Z \in K[A]$. This gives us the solution stated in the second case.

The proof of the third case is exactly similar to the second case.
\end{proof}
\begin{Example} 
The following example will give insight into the significance of the Theorem \ref{two_jordan_main}, which otherwise would have been more complicated to solve.
Let $\mathcal{A} = \begin{pmatrix}
A&0\\
0&A
\end{pmatrix}$, where $A = \begin{pmatrix}
\lambda & 1\\
0 & \lambda
\end{pmatrix}$, for some $\lambda \neq 0$. Then by \ref{two_jordan_main}, one of the forms for the solution $X \in Sol_{\mathcal{A}}$ is $\begin{pmatrix}
X_1 &0\\
X_2 &0
\end{pmatrix}$, where $X_1 \in Sol_A$, and let $X_2$ be the variable matrix $\begin{pmatrix}
b&c\\
d&e
\end{pmatrix}$. Also, for $X_1$, let's choose one of the forms given in the special solutions Example \ref{ex1}, say  $X_1 = \begin{pmatrix}
\lambda+\lambda\sqrt{a}& a\\
-\lambda^2 & \lambda-\lambda \sqrt{a}
\end{pmatrix} $ for some $a \in K$. Then on simplifying the equation $\mathcal{A}X\mathcal{A}=X\mathcal{A}X$, it reduces to the following equations: \\
\begin{equation}
\begin{aligned}
   \text{(1) }\lambda^2(\sqrt{a}b-c \lambda)= b\lambda^2+ d \lambda &&
    \text{(2) } \lambda^2(\sqrt{a}d-c \lambda) = d \lambda^2 \\
    \text{(3) } ad\lambda - \sqrt{a}\lambda (d+e\lambda)=0  &&
    \text{(4) } d+e\lambda =  ab\lambda - \sqrt{a}\lambda (b+c\lambda)
 \end{aligned}
\end{equation}
Then for different cases of $\lambda, a$, we have the following solutions for $X_2$, which along with the given $X_1$, completes the solution $X$. 
\begin{itemize}
    \item[i]When $a=0$, $X_2 = \begin{pmatrix}
    b&\frac{e-b}{\lambda}\\
    -e\lambda & e
    \end{pmatrix} $ where $b,e \in K$.
    \item[ii] When $a \neq 0, a \neq 1, \lambda =1 $, 
    $X_2 = \begin{pmatrix}
      \frac{c}{\sqrt{a}-1}+\frac{e}{(\sqrt{a}-1)^2} & c\\
      \frac{e}{\sqrt{a}-1} & e
    \end{pmatrix}$ where $c,e \in K$.
    \item[iii]  When $a \neq 0, a \neq 1, \lambda \neq 1 $, 
    $X_2 = \begin{pmatrix}
    \frac{e}{(\sqrt{a}-1)^2} & 0\\
    \frac{e}{\sqrt{a}-1} & e
    \end{pmatrix}$ where $ e \in K$.
    \item[iv] When $a =1, \lambda \neq 1$, 
    $X_2 = \begin{pmatrix}
     b & 0\\
    0 &  0
    \end{pmatrix}$
    \item[v] When $a=1, \lambda = 1$, 
    $X_2 = \begin{pmatrix}
    b& c\\
    -c&0
    \end{pmatrix}$ where $b,c \in K$.
\end{itemize}

\end{Example}
\section{Pencils of solutions}
\begin{lemma}\label{pencil1}
Let $X \in Sol_A$, then for a matrix $M$ with $MA=0=AM$, for any $\alpha \in K$, $X+\alpha M \in Sol_A$.
 \end{lemma}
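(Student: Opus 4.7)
The proof is a direct verification: I simply expand both sides of the Yang-Baxter identity for $X + \alpha M$ and apply the hypothesis $AM = MA = 0$ to kill off every term involving $M$ except those that reduce to the original equation.

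For the left-hand side, I compute
\[ A(X+\alpha M)A = AXA + \alpha\, AMA. \]
Because $AM = 0$, the second term vanishes, leaving $AXA$. For the right-hand side, expansion gives four terms:
\[ (X+\alpha M)A(X+\alpha M) = XAX + \alpha\, XAM + \alpha\, MAX + \alpha^2\, MAM. \]
The term $XAM = X(AM) = 0$ by $AM=0$; the term $MAX = (MA)X = 0$ by $MA=0$; and $MAM$ vanishes for either reason. Thus the right-hand side reduces to $XAX$.

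Since $X \in Sol_A$ gives $AXA = XAX$, both sides agree, so $X + \alpha M \in Sol_A$. There is no real obstacle here; the lemma is just an algebraic observation that matrices annihilating $A$ on both sides form an affine direction preserving $Sol_A$. The only thing to be careful about is remembering to invoke both $AM = 0$ and $MA = 0$, since the cross terms in the expansion require each of these separately.
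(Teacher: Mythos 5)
Your proposal is correct and is essentially identical to the paper's own proof: both expand $A(X+\alpha M)A$ and $(X+\alpha M)A(X+\alpha M)$ directly and use $AM=0$ and $MA=0$ to annihilate the cross terms $XAM$, $MAX$, and $MAM$, reducing the identity to $AXA=XAX$. No differences worth noting.
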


\begin{proof}
we have
we have
$A(X+\alpha M)A = AXA + \alpha AMA = AXA$
\[(X+\alpha M)A(X+\alpha M) = XAX+ \alpha XAM +\alpha MAX + \alpha MAM = XAX \]
Note that in this case, $M$ itself forms a solution. 
\end{proof}
\begin{lemma}
Let $X \in Sol_A$, for any $\alpha, \beta \in K$, $\alpha X + \beta M \in Sol_A$, if $MA=0=AM$ and $AXA = 0$. 
\end{lemma}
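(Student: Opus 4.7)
The plan is to directly expand both sides of the equation $A(\alpha X+\beta M)A = (\alpha X+\beta M)A(\alpha X+\beta M)$ and observe that every term vanishes under the hypotheses.

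First, I would note the key consequence of the two standing assumptions. Since $X \in Sol_A$, we have $XAX = AXA$, and combined with $AXA = 0$ this gives $XAX = 0$ as well. Also, from $AM = MA = 0$ we immediately get $AMA = 0$, $XAM = X(AM) = 0$, $MAX = (MA)X = 0$, and $MAM = (MA)M = 0$. So after expansion, every cross term and every pure term involving either $A$-sandwiched $M$ or $A$-sandwiched $X$ is zero.

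Next I would carry out the expansion. The left-hand side becomes
\begin{equation*}
A(\alpha X + \beta M)A = \alpha\, AXA + \beta\, AMA = 0,
\end{equation*}
and the right-hand side becomes
\begin{equation*}
(\alpha X + \beta M)A(\alpha X + \beta M) = \alpha^2 XAX + \alpha\beta\, XAM + \alpha\beta\, MAX + \beta^2 MAM = 0,
\end{equation*}
using the four vanishing identities above. Hence both sides equal $0$, and $\alpha X + \beta M \in Sol_A$.

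There is essentially no obstacle here: the result is a short bilinear computation that relies only on the compatibility $AM = MA = 0$ (which kills every monomial containing $M$ adjacent to $A$) together with the extra hypothesis $AXA = 0$ (which, via the YBE, also annihilates $XAX$). The only thing worth flagging is that the statement is strictly stronger than Lemma \ref{pencil1}, in that it allows the coefficient $\alpha$ of $X$ to differ from $1$; this requires the additional assumption $AXA = 0$, without which the $\alpha^2 XAX$ term on the right would not match the $\alpha AXA$ term on the left.
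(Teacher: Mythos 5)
Your proof is correct: the hypotheses $AM=MA=0$ kill every monomial in which $M$ meets $A$, and $AXA=0$ together with $XAX=AXA$ kills the remaining $\alpha^2 XAX$ term, so both sides vanish identically. The paper reaches the same conclusion by a reduction rather than a direct expansion: it first quotes the result from \cite{YBE2} that $\alpha X \in Sol_A$ whenever $AXA=0$, and then applies Lemma \ref{pencil1} to $\alpha X$ to absorb the $\beta M$ term. Your self-contained computation buys independence from the external citation and makes explicit exactly which products vanish (in particular that both sides are not merely equal but zero), while the paper's route is shorter on the page and highlights how the statement layers on top of Lemma \ref{pencil1}; your closing remark correctly identifies why the extra hypothesis $AXA=0$ is what permits a coefficient $\alpha \neq 0,1$ on $X$, which is precisely the content of the cited fact from \cite{YBE2}.
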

\begin{proof}
If $X \in Sol_A$, then for $\alpha \neq 0,1$, $\alpha X$ is a solution if $AXA=0$ \cite{YBE2}. Then the rest follows directly from the above Lemma \ref{pencil1}.
\end{proof}
When A is non-singular, $MA=0=AM$ demands M to be 0. This discussion is interesting when A is singular. Let A be a singular matrix in its Jordan form. Say $A = \begin{pmatrix}
J_1 & 0\\
0 & J_2
\end{pmatrix}$ where $J_1 = \begin{pmatrix}
0&1&0\\
0&0&1\\
0&0&0
\end{pmatrix}$, $J_2 = \begin{pmatrix}
\lambda & 1\\
0 & \lambda
\end{pmatrix}$ for some $\lambda \neq 0 \in K$. Then we can see that $M$ has the form $\begin{pmatrix}
0&0&\alpha&0&0\\
0&0&0&0&0\\
0&0&0&0&0\\
0&0&0&0&0\\
0&0&0&0&0
\end{pmatrix}$, where $\alpha \in K$. When $J_2$ is also Jordan block of 0, that is $J_2 = \begin{pmatrix}
0 & 1\\
0 & 0
\end{pmatrix}$, M will have the following form. $M = \begin{pmatrix}
0&0&a_1&0&a_2\\
0&0&0&0&0\\
0&0&0&0&0\\
0&0&a_3&0&a_4\\
0&0&0&0&0
\end{pmatrix}$, where $a_i \in K$. We can observe that when there is a Jordan block of non-zero eigenvalue, the corresponding rows and columns in M will be annihilated. Also, corresponding blocks in M are solutions to respective YBEs $J_iXJ_i = XJ_iX$. 
\begin{theorem}
Let $X_0, X_1 \in Sol_A$, for any $\lambda \neq \in K$, $X_0+\lambda X_1 \in Sol_A$ if and only if $AX_1A=0=X_1AX_1$ and $X_0AX_1+X_1AX_0 =0$.
\end{theorem}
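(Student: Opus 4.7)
The plan is to substitute $Y = X_0 + \lambda X_1$ directly into the YBE $AYA = YAY$ and expand both sides in powers of $\lambda$. Since $X_0 \in Sol_A$, the degree-zero terms $AX_0A$ and $X_0AX_0$ cancel immediately. Collecting what remains by degree in $\lambda$ produces the single identity
\begin{equation*}
\lambda\bigl(AX_1A - X_0AX_1 - X_1AX_0\bigr) \;=\; \lambda^{2}\, X_1AX_1.
\end{equation*}

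For the \emph{if} direction, I would substitute the three hypotheses $AX_1A = 0$, $X_1AX_1 = 0$, and $X_0AX_1 + X_1AX_0 = 0$ into this identity; both sides collapse to zero, so $X_0 + \lambda X_1$ satisfies the YBE for every $\lambda$. For the \emph{only if} direction, I would divide through by $\lambda \neq 0$ and read the result as a polynomial identity in $\lambda$: its left-hand side is constant in $\lambda$ while its right-hand side is linear in $\lambda$. Since the identity is assumed to hold for every nonzero $\lambda$ (in particular for at least two distinct scalars, under the mild assumption $|K|\geq 3$), the coefficients of $1$ and $\lambda$ must vanish separately, giving $X_1 A X_1 = 0$ and $AX_1A - X_0AX_1 - X_1AX_0 = 0$. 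Now invoking the standing hypothesis $X_1 \in Sol_A$, i.e.\ $AX_1A = X_1AX_1$, the first conclusion upgrades to $AX_1A = 0$, whence the second forces $X_0AX_1 + X_1AX_0 = 0$, yielding all three required equalities.

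The main obstacle, if any, is the mild field-size subtlety in the only-if direction: one needs at least two distinct nonzero scalars in $K$ to decouple the two coefficients of the polynomial in $\lambda$. Since the paper works over a general field $K$ that is effectively assumed to be large enough (most of its results reason via spectra, Jordan forms, and polynomial identities), this is a harmless background assumption rather than a genuine obstruction; otherwise the argument is a routine manipulation powered entirely by the fact that $X_0, X_1 \in Sol_A$.
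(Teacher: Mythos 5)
Your proposal is correct and follows essentially the same route as the paper: expand $A(X_0+\lambda X_1)A=(X_0+\lambda X_1)A(X_0+\lambda X_1)$, cancel the degree-zero terms via $X_0\in Sol_A$, and in the only-if direction decouple the $\lambda$ and $\lambda^2$ terms (the paper specializes to $\lambda=1$ and then varies $\lambda$, while you compare coefficients) before using $AX_1A=X_1AX_1$ to upgrade to all three identities. Your explicit remark that the only-if direction needs at least two distinct nonzero scalars in $K$ is a fair point that the paper's ``true for every $\lambda$'' phrasing leaves implicit.
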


\begin{proof}
$(\Rightarrow)$
We have $X_0AX_0 = AX_0A, X_1AX_1=AX_1A$.   
Which implies $A(X_0+\lambda X_1)A = AX_0A+ \lambda AX_1A = AX_0A$, as $AX_1A=0$. Also, we have
$(X_0+\lambda X_1)A(X_0+\lambda X_1)  = X_0AX_0$.

$(\Leftarrow)$ For any $\lambda \neq \in K$, if $X_0+ \lambda X_1$ is a solution, then,
\begin{equation}
    \begin{aligned}
A(X_0+\lambda X_1)A &= AX_0A+ \lambda AX_1A\\
&= (X_0+\lambda X_1)A(X_0+\lambda X_1)\\
&= X_0AX_0+\lambda X_0AX_1+ \lambda X_1AX_0+ \lambda^2 X_1AX_1.\\
\end{aligned}
\end{equation}
Then $\lambda AX_1A= \lambda X_0AX_1+ \lambda X_1AX_0+ \lambda^2 X_1AX_1$.
Since it's true for every $\lambda$, it's true for $\lambda = 1$ also, which gives 
$X_0AX_1+X_1AX_0 =0$. This implies $\lambda AX_1A=0= \lambda^2 X_1AX_1$ which is true for any $\lambda$, gives $AX_1A=0=X_1AX_1$.
\end{proof}
\begin{Example}
When A is the $3 \times 3$ Jordan block, with eigenvalue zero, in Example \ref{ex3} of special solutions, we have seen that X must have the form $\begin{pmatrix}
a&b&c \\
0&0&f \\
0&0&i
\end{pmatrix}$, such that $af+bi = 0$. Then, by taking $X_j = \begin{pmatrix}
a_j&b_j&c_j \\
0&0&f_j \\
0&0&i_j
\end{pmatrix} \in Sol_A$, for $j=0,1$, we can easily verify the condition for $X_0+\lambda X_1 \in Sol_A$. Note that $AX_jA=0$. Now,\\ 
$X_1AX_0+X_0AX_1 =  \begin{pmatrix}
0&0&a_1f_0+a_0f_1+b_1i_0+b_0i_1\\
0&0&0\\
0&0&0
\end{pmatrix} = 0$ , if $\lambda (a_1f_0+a_0f_1+b_1i_0+b_0i_1 ) = 0$.
This is agreeing with the general form of solution when 
\[\begin{pmatrix}
a_0+\lambda a_1&b_0+\lambda b_1&c_0+\lambda c_1 \\
0&0&f_0+\lambda f_1 \\
0&0&i_0+\lambda i_1
\end{pmatrix} \in Sol_A.\]
\end{Example}
\section{Conclusion}
In this paper, solutions to the Yang-Baxter equation has studied for coefficient matrix A being single Jordan blocks and A having two Jordan block in the canonical form. A complete characterization of solutions is still an open question. Although the solutions are harder to find, it inspire to approach through many parallel ways including via Gr$\ddot{o}$bner basis. In future we would like to look into those kinds of solutions more, also these theories ignite light to finding solutions for similar interesting matrix equations such as $(i)\text{ }\Phi_A(X)X = 0$, $(ii)\text{ } \Phi_A(X)X = \Phi_X(A)A$.
\section{Bibliography}

\bibliographystyle{abbrv}
\bibliography{YB}
\end{document}